\newtheorem{theorem}{Theorem}
\newtheorem{proposition}[theorem]{Proposition}
\newtheorem{definition}[theorem]{Definition}
\newtheorem{lemma}[theorem]{Lemma}
\newtheorem{corollary}[theorem]{Corollary}
\newtheorem{conjecture}[theorem]{Conjecture}
\renewenvironment{proof}{\textbf{Proof:}}{\hfill$\square$}
\def\N{{\mathbb N}}
\def\EE{{\mathbb E}}
\def\E{{\mathbb E}}
\def\PP{{\mathbb P}}
\def\RR{{\mathbb R}}
\def\Poi{\text{Poi}}
\def\Ncal{{\mathcal N}}
\def\EE{{\mathbb E}}
\def\PP{{\mathbb P}}
\def\R{{\mathbb R}}
\def\N{{\mathbb N}}
\newcommand{\BEAS}{\begin{eqnarray*}}
\newcommand{\EEAS}{\end{eqnarray*}}
\newcommand{\BEA}{\begin{eqnarray}}
\newcommand{\EEA}{\end{eqnarray}}
\newcommand{\BEQ}{\begin{equation}}
\newcommand{\EEQ}{\end{equation}}
\newcommand{\BIT}{\begin{itemize}}
\newcommand{\EIT}{\end{itemize}}
\newcommand{\BNUM}{\begin{enumerate}}
\newcommand{\ENUM}{\end{enumerate}}
\DeclarePairedDelimiter\ceil{\lceil}{\rceil}
\title{\LARGE \bf
  Recovering asymmetric communities in the stochastic block model}
\author{Francesco Caltagirone, Marc Lelarge and L\'eo Miolane\thanks{The authors are with INRIA-ENS. {\tt\small f.calta@gmail.com}; {\tt\small marc.lelarge@ens.fr}; {\tt\small leo.miolane@ens.fr}}}
\begin{document}

\maketitle
\thispagestyle{empty}
\pagestyle{empty}

%%%%%%%%%%%%%%%%%%%%%%%%%%%%%%%%%%%%%%%%%%%%%%%%%%%%%%%%%%%%%%%%%%%%%%%%%%%%%%%%
\begin{abstract}

We consider the sparse stochastic block model in the case where the degrees are uninformative. The case where the two communities have approximately the same size has been extensively studied and we concentrate here on the community detection problem in the case of unbalanced communities. In this setting, spectral algorithms based on the non-backtracking matrix are known to solve the community detection problem (i.e.\ do strictly better than a random guess) when the signal is sufficiently large namely above the so-called Kesten Stigum threshold. In this regime and when the average degree tends to infinity, we show that if the community of a vanishing fraction of the vertices is revealed, then a local algorithm (belief propagation) is optimal down to Kesten Stigum threshold and we quantify explicitly its performance. Below the Kesten Stigum threshold, we show that, in the large degree limit, there is a second threshold called the spinodal curve below which, the community detection problem is not solvable. The spinodal curve is equal to the Kesten Stigum threshold when the fraction of vertices in the smallest community is above $p^*=\frac{1}{2}-\frac{1}{2\sqrt{3}}$, so that the Kesten Stigum threshold is the threshold for solvability of the community detection in this case. However when the smallest community is smaller than $p^*$, the spinodal curve only provides a lower bound on the threshold for solvability. In the regime below the Kesten Stigum bound and above the spinodal curve, we also characterize the performance of best local algorithms as a function of the fraction of revealed vertices. Our proof relies on a careful analysis of the associated reconstruction problem on trees which might be of independent interest. In particular, we show that the spinodal curve corresponds to the reconstruction threshold on the tree.

\end{abstract}

\section{Introduction}
We first define the random graph model that we are going to study.

\begin{definition}[Stochastic block model (SBM)]
	Let $M$ be a $2 \times 2$ symmetric matrix whose entries are in $[0,1]$. Let $n \in \N^*$ and $p \in [0,1]$. We define the stochastic block model with parameters $(M,n,p)$ as the random graph $G$ defined by: 

	\begin{enumerate}
		\item The vertices of $G$ are the integers in $\{1, \dots, n\}$. 
		\item For each vertex $i \in \{1, \dots, n \}$ one draws independently $X_i \in \{1,2\}$ according to $\PP(X_i = 1) = p$. $X_i$ will be called the label (or the class, or the community) of the vertex $i$.
		\item For each pair of vertices $\{i,j\}$ the unoriented edge $G_{i,j}$ is then drawn conditionally on $X_i$ and $X_j$ according to a Bernoulli distribution with mean $M_{X_i,X_j})$.
	\end{enumerate}
\end{definition}

The graph $G$ is therefore generated according to the underlying partition of the vertices in two classes. Our main focus will be on the community detection problem: given the graph $G$, is it possible to retrieve the labels $X$ better than a random guess? 

We investigate this question in the asymptotic of large sparse graphs, when $n \rightarrow +\infty$ while the average degree remains fixed. Our quantitative results will then be obtained when the average degree tends to infinity.
%We are also going to restrict ourselves to a certain form for the matrix $M$ given by:
%We would like to investigate the case when the average degree of the vertices of $G$ is remains bounded when $n \rightarrow +\infty$. Therefore $M$ will be of the form
We define the connectivity matrix $M$ as follows:

\begin{equation} \label{eq:M}
	M =
	\frac{d}{n}
	\begin{pmatrix}
		a & b \\ 
		b & c
	\end{pmatrix},
\end{equation}
where $a,b,c,d$ remain fixed as $n \rightarrow + \infty$. In this case, it is not possible to correctly classify more than a certain fraction of the vertices correctly and we will say that community detection is solvable if there is some algorithm that recovers the communities more accurately than a random guess would.

A simple argument (see below) shows that if $pa +(1-p)b \neq pb + (1-p)c$
then non-trivial information on the community of a vertex can be gained just by looking at its degree and the community detection is then solvable. In this paper, we concentrate on the case:
\BEA
\label{eq:egal}pa +(1-p)b = pb + (1-p)c = 1.
\EEA
The symmetric case where $p=1/2$ and $a=c$ has been extensively studied starting with \cite{decelle2011asymptotic} and gives rise to an interesting phenomena: if $d(1-b)^2<1$ then community detection is not solvable \cite{mossel2015symSBM1}, while if $d(1-b)^2>1$, it is solvable (in polynomial time) \cite{massoulie2014symSBM,mossel2013symSBM2}. Much less is known in the case where \eqref{eq:egal} holds and $p<1/2$.

Under condition \eqref{eq:egal}, the matrix $R$ defined by
\BEA
\label{eq:defR}R=\begin{pmatrix}
		pa & (1-p)b \\ 
		pb & (1-p)c
	\end{pmatrix}
\EEA 
is a stochastic matrix with two eigenvalues: $\lambda_1=1>\lambda_2=1-b$. If $d\lambda_2^2>1$, it is shown in \cite{neeman2014asymSBM} that it is easy to distinguish $G$ from an Erd\H{o}s-R\'enyi model with the same average degree. Although this is not formally proven in \cite{bordenave2015non}, the spectral algorithm based on the non-backtracking matrix should solve the community detection problem in this case. Here, we prove that if a vanishing fraction of labels is given, then a local algorithm (belief propagation) allows to solve the community detection problem.

The case $d\lambda_2^2<1$ (known as below the Kesten-Stigum bound) is more challenging. It is shown in \cite{neeman2014asymSBM}, that the community detection problem is still solvable for some values of $p$ but it is expected in \cite{decelle2011asymptotic} that no computationally efficient reconstruction is possible. In \cite{neeman2014asymSBM}, some bounds are given on the non-reconstructability region but they are not expected to be tight.

In this work, we are mainly interested in a regime where $d$ tends to infinity while $d\lambda_2^2=d(1-b)^2$ remains constant (in particular $a,b,c$ tend to one). Note that we first let $n$ tend to infinity and then let $d$ tend to infinity mainly in order to get explicit formulas.
In this regime, we show that for all values of $p\in (p^*,1/2)$ with $p^*=\frac{1}{2}-\frac{1}{2\sqrt{3}}$, the situation is similar to the balanced case: below the Kesten-Stigum bound, i.e.\ when $d\lambda_2^2<1$, the community detection problem is not solvable.
For $p<p^*$, we compute a function $p\mapsto\lambda_{sp}(p)<1$ such that for $d\lambda_2^2<\lambda_{sp}(p)$, the community detection problem is not solvable. As shown by \cite{neeman2014asymSBM}, there are points in the region $\lambda_{sp}(p)<d\lambda_2^2<1$ where the community detection problem is solvable but we do not expect the bound $\lambda_{sp}(p)$ to be tight, i.e.\ the information theoretic threshold for community detection should be above $\lambda_{sp}(p)$ for $p<p^*$.  

There is an important probabilistic interpretation of the matrix $R$ relating to the local structure of the SBM. As explained below, the SBM converges locally toward a labeled Poisson Galton-Watson branching process with mean offspring $d$: the label of the root is $1$ with probability $p$ and $2$ with probability $1-p$ and then conditioned on the parent's label being $i$, its children's labels are independently chosen to be $j$ with probability $R_{ij}$. A problem closely related to the detection problem in the SBM is the reconstruction problem on this random tree: given some information about the labels at depth $n$ from the root, is it possible to infer some information about the label of the root when $n\to \infty$? It is known \cite{mossel2004survey} that the Kesten-Stigum bound corresponds to census-solvability (i.e.\ knowing only the number of labels $1$ and $2$ at depth $n$ allows to get some information about the label of the root). When $d\to \infty$, we show that $\lambda_{sp}(p)$ corresponds to the solvability threshold for the reconstruction problem on the tree. We also consider the reconstruction problem where the label of each node at depth $n$ is revealed with probability $q$. Then in the region $\lambda_{sp}(p)<d\lambda_2^2<1$, we compute the minimal value of $q$ such that some information about the label of the root can be recovered from the revealed labels. Above the Kesten Stigum bound, i.e.\ when $1<d\lambda_2^2$, this minimal value is 0.

In Section~\ref{section:community_detection}, we define the community detection problem and its variation when some labels are revealed. We also give several equivalent formulation for its solvability. In Section~\ref{sec:above}, we give our main results about reconstruction above the Kesten-Stigum bound and in Section~\ref{sec:nonrec}, we describe what happens below the Kesten-Stigum bound. Section~\ref{sec:pbrectree} defines the various notions of solvability for the problem of reconstruction on trees and gives our main result for this problem. The technical proofs are given in the subsequent sections. We first use the cavity method on trees in Section~\ref{section:cavity_method} and then relate these results to the original problem of community detection in Section~\ref{sec:proofsbm}.

\section{Community detection in the stochastic block model} \label{section:community_detection}

We are interested into inferring the labels $X$ from the graph $G$.  To do so, we aim at constructing an estimator (i.e.~a function of the observation $G$) $T(G)=(T_1(G) , \dots, T_n(G))\in \{1,2\}^n$, such that $T_i(G)$ is `close' to $X_i$. We will measure the performance of $T$ using the `rescaled average success probability' defined as follows
\begin{align}
	P_{succ}(T) = \frac{1}{n} \sum_{i=1}^n &\Big( \PP(T_i(G) = 1 | X_i = 1) \nonumber \\
				&+ \PP(T_i(G) = 2 | X_i = 2) - 1 \Big)\label{eq:defpsucc}
\end{align}

The `$-1$' is here to re-scale the success probability and ensures that $P_{succ} = 0$ for `dummy estimators' (i.e.\ estimators that not depends on the observed graph $G$).

The optimal test with respect to this metric is
$$
T^{opt}_i(G) =
\begin{cases}
	1 & \text{if } \log(\frac{\PP(X_i = 1 \vert G)}{\PP(X_i = 2 \vert G)}) \geq \log(\frac{p}{1-p}) \\
	2 & \text{otherwise} \\
\end{cases}
$$

Let $s_0$ be uniformly chosen among the vertices of $G$, independently of all other random variables. The maximal achievable rescaled success probability reduces then to

\BEAS
P_{succ}(T^{opt}) &=& \PP(T^{opt}_{s_0}(G) = 1 | X_{s_0} = 1)\\
&&+ \PP(T^{opt}_{s_0}(G) = 2 | X_{s_0} = 2) - 1
\EEAS

We define a notion of solvability for the community detection problem.

\begin{definition}
	We say that the community detection problem on a given stochastic block model is solvable if
	$$
	\liminf_{n \to \infty} P_{succ}(T^{opt}) > 0.
	$$
\end{definition}

Another popular measure of performance is the overlap as defined in \cite{neeman2014asymSBM}:
\begin{align*}
	&{\rm overlap}(T(G),X) \\
&= \EE\left[ (T_{s_0}-1)(X_{s_0}-1)\right]
-\EE\left[ T_{s_0}-1\right]\EE\left[X_{s_0}-1\right] \\
&+\EE\left[ (2-T_{s_0})(2-X_{s_0})\right]
-\EE\left[2-T_{s_0}\right]\EE\left[2-X_{s_0}\right].
\end{align*}

A simple computation shows that
\BEAS
{\rm overlap}(T^{opt},X) &=& p(1-p)P_{succ}(T^{opt}).
\EEAS

We have another equivalent characterization for solvability given by the following proposition:
\begin{proposition} \label{prop:equivalence_dtv_psucc}
	We have
	$$
	P_{succ}(T^{opt}) = D_{TV}(P_{1},P_{2}),
	$$
	where $P_1$ and $P_2$ denote the conditional distribution of the graph $G$, conditionally respectively on $X_{s_0} = 1$ and $X_{s_0} = 2$, where $s_0$ is a uniformly chosen random vertex of $G$.
\end{proposition}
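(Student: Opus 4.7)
The plan is to treat the pair $(G, s_0)$ as a single rooted random graph observation and to recognize the optimal estimator $T^{opt}_{s_0}$ as the likelihood-ratio test distinguishing $X_{s_0}=1$ from $X_{s_0}=2$; the identity will then follow from the Neyman--Pearson / variational characterization of the total variation distance.

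First, I would rewrite $T^{opt}_i$ in maximum-likelihood form. Bayes' formula gives
\BEAS
\log \frac{\PP(X_i=1 \mid G)}{\PP(X_i=2 \mid G)} &=& \log \frac{p}{1-p} + \log \frac{\PP(G \mid X_i=1)}{\PP(G \mid X_i=2)},
\EEAS
so the threshold $\log(p/(1-p))$ in the definition cancels the prior log-odds exactly, and $T^{opt}_i(G)=1$ is equivalent to $\PP(G \mid X_i=1) \geq \PP(G \mid X_i=2)$.

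Second, since $s_0$ is uniform on $\{1,\dots,n\}$ and independent of $(G, X)$, an elementary conditioning shows that, under the hypothesis $X_{s_0}=k$, the rooted graph $(G, s_0)$ has law $P_k(\{(g, i)\}) = (1/n)\PP(G=g \mid X_i=k)$. Setting $A := \{(g, i) : \PP(G=g \mid X_i=1) \geq \PP(G=g \mid X_i=2)\}$ and substituting the equivalence from the first step into the simplified expression for $P_{succ}(T^{opt})$ recorded just before the proposition, I obtain $P_{succ}(T^{opt}) = P_1(A) - P_2(A)$. By construction $A$ is exactly the set $\{dP_1/dP_2 \geq 1\}$, so the Neyman--Pearson lemma yields $P_1(A) - P_2(A) = \sup_B [P_1(B) - P_2(B)] = D_{TV}(P_1, P_2)$, which is the claimed identity.

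The argument is essentially bookkeeping and presents no real obstacle; the one point worth flagging is that $P_1$ and $P_2$ have to be interpreted as the laws of the rooted graph $(G, s_0)$, not of $G$ alone --- marginalizing $s_0$ out before computing $D_{TV}$ would only yield the convexity upper bound.
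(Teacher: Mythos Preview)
Your proof is correct and follows essentially the same route as the paper's: both identify $P_{succ}(T^{opt})$ with the supremum $\sup_A [P_1(A)-P_2(A)]$ via the variational characterization of total variation, the only cosmetic difference being that the paper takes the supremum over all tests $1+1_A$ directly, whereas you first pin down the specific optimal set $A=\{dP_1/dP_2\ge 1\}$ and then invoke Neyman--Pearson to certify that it attains the supremum. Your remark that $P_1,P_2$ should be read as laws of the rooted graph $(G,s_0)$ is a welcome clarification of a point the paper leaves implicit.
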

\begin{proof}
  The set of estimators of $X_{s_0}$ is precisely $\{ 1 + 1_A | A \text{ measurable set} \}$. Consequently
  \BEAS
  P_{succ}(T^{opt}) &=& \sup_{A \text{ measurable}} \PP(G \in A^c | X_{s_0} = 1) \\
        &&+ \PP(G \in A | X_{s_0} = 2) - 1 \\
  &=& \sup_{A \text{ measurable}} \PP_1(A^c) + \PP_2(A) - 1\\
  &=& \sup_{A \text{ measurable}} \PP_2(A) - \PP_1(A) \\
		&=& D_{TV}(P_1,P_2) 
\EEAS
\end{proof}

In our setting \eqref{eq:M}, the asymptotic degree distribution of a given vertex is a Poisson random variable. Let $i \in \{1, \dots, n\}$ be a vertex of $G$, we will denote by $d_i$ its degree. Then we have
\begin{align*}
	d_i \xrightarrow[n \to +\infty]{(d)} \Poi(d(pa+(1-p)b)) \text{ cond. on } \{X_i = 1 \}
	\\
	d_i \xrightarrow[n \to +\infty]{(d)} \Poi(d(pb+(1-p)c)) \text{ cond. on } \{X_i = 2 \}
\end{align*}

If the asymptotic average degrees differ from class 1 to class 2, we see easily that the problem is solvable.

\begin{lemma}
	If $pa + (1-p)b \neq pb + (1-p)c$ then the community detection problem is solvable.
\end{lemma}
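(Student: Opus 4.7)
The plan is to exhibit a concrete estimator built only from the degree of each vertex and show that it already achieves positive rescaled success probability in the limit; since $T^{opt}$ is optimal by definition, $P_{succ}(T^{opt})$ will inherit the same lower bound.

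First, I would set $\mu_1 := d(pa+(1-p)b)$ and $\mu_2 := d(pb+(1-p)c)$, and without loss of generality assume $\mu_1 > \mu_2$ (the hypothesis of the lemma is that these differ). For a positive integer $t$ (to be chosen later), define the degree-based estimator
$$T_i(G) = \ind\{d_i \geq t\} + 2\,\ind\{d_i < t\}.$$
By the exchangeability of the vertices in the SBM (the $X_i$ are i.i.d.\ and the edge law is symmetric in the vertex indices), the quantity $\PP(T_i(G)=j \mid X_i=j)$ does not depend on $i$. Hence
$$P_{succ}(T) = \PP(d_1 \geq t \mid X_1 = 1) + \PP(d_1 < t \mid X_1 = 2) - 1.$$

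Next, I would invoke the asymptotic degree statements given just before the lemma, namely $d_1 \mid X_1 = j \Rightarrow \Poi(\mu_j)$. Since convergence in distribution of integer-valued random variables is equivalent to pointwise PMF convergence, $\PP(d_1 \geq t \mid X_1 = j) \to \PP(\Poi(\mu_j) \geq t)$ for any fixed $t \in \N^*$. Therefore
$$\lim_{n \to \infty} P_{succ}(T) = \PP(\Poi(\mu_1) \geq t) + \PP(\Poi(\mu_2) < t) - 1.$$

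Finally, I would choose $t$ so that this limit is strictly positive. This is possible because $\Poi(\mu_1)$ strictly stochastically dominates $\Poi(\mu_2)$ when $\mu_1 > \mu_2$: for every integer $t \geq 1$, $\PP(\Poi(\mu_1) \geq t) > \PP(\Poi(\mu_2) \geq t)$, which is exactly the positivity of the expression above. Combined with $P_{succ}(T^{opt}) \geq P_{succ}(T)$, this yields $\liminf_n P_{succ}(T^{opt}) > 0$ and hence solvability. I do not anticipate any serious obstacle: the only non-trivial ingredient is the Poisson convergence of the conditional degree distribution, which is granted by the excerpt, and the rest is the elementary observation that two distinct Poisson distributions are already distinguishable from a single sample via a threshold test.
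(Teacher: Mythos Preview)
Your proof is correct and rests on the same idea as the paper's: the conditional degree of a vertex converges to a Poisson law whose mean depends on the class, so when the two means differ the classes are already distinguishable from a single degree observation. The paper packages this slightly differently: instead of exhibiting a specific threshold test and invoking stochastic domination of Poisson laws, it appeals to the equivalence $P_{succ}(T^{opt}) = D_{TV}(P_1,P_2)$ from Proposition~\ref{prop:equivalence_dtv_psucc}, then lower-bounds the total variation by restricting to events of the form $\{d_{s_0}\in B\}$ and passing to the limit to obtain $D_{TV}(\Poi(\mu_1),\Poi(\mu_2))>0$. Your route is more constructive and avoids the TV characterization entirely; the paper's route is a touch shorter since it does not need to pick a threshold or argue stochastic domination explicitly. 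Both are equally valid.
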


\begin{proof}
  Using the definition of solvability in term of the total variation distance, we have:
  \BEAS
  \lefteqn{\liminf_{n \to \infty} D_{TV}(P_1,P_2) }\\
  &=& \liminf_{n \to \infty} \sup_{\text{event } A} | P_1(A) - P_2(A) |
		\\
		& \geq& \liminf_{n \to \infty} \sup_{B \subset \N} | P_1(\{d_{s_0} \in B \}) - P_1(\{d_{s_0} \in B \}) |
		\\
		&=& D_{TV} (\Poi(d(pa + (1-p)b)), \Poi(d(pb + (1-p)c))) \\
                &>& 0
\EEAS
\end{proof}

In the rest of the paper, we will always assume that \eqref{eq:egal} is valid,
so that the average degree in the graph is $d$.

We also define
\BEA
\label{eq:deflambda}\lambda = d(1-b)^2=d\lambda_2^2,
\EEA
where $\lambda_2$ is the second largest eigenvalue of the matrix $R$ defined in \eqref{eq:defR}. As explained in the introduction, the Kesten-Stigum threshold corresponds to $\lambda =1$. The larger $\lambda$ is, the stronger the signal is, so that $\lambda$ can be interpreted as a ``signal to noise ratio''.  Most of our results below will be obtained in the limit where first $n$ tends to infinity and then $d$ tends to infinity but the parameter $\lambda$ will always remain fixed, as well as the parameter $p\in [0,1/2]$ corresponding to the proportion of nodes in community one.

We now introduce a variation of the standard community detection problem where a fraction $q$ of the vertices have their labels revealed. More formally, in this setting the label $X_v$ of each vertex $v\in \{1,\dots,n\}$ is observed with probability $q\in [0,1]$ independently of everything else and the estimator $T(G,q)=(T_1(G,q),\dots, T_n(G,q))$ is then a function of the observed graph $G$ and the observed labels. The probability of success is again defined by \eqref{eq:defpsucc} where the $T_i(G)$'s are replaced by the $T_i(G,q)$'s and the optimal test $T^{opt}(G,q)$ is then
$$
T^{opt}_i(G,q) =
\begin{cases}
	1 & \text{if } \log(\frac{\PP(X_i = 1 \vert G,Y)}{\PP(X_i = 2 \vert G,Y)}) \geq \log(\frac{p}{1-p}) \\
	2 & \text{otherwise,} \\
\end{cases}
$$
where $Y$ denotes the (random) set of observed vertices.
Note that we have
$$
\liminf_{n \to \infty} P_{succ}(T^{opt}(G,q))\geq q \geq 0,
$$
so that the notion of solvability is unclear as soon as $q>0$.

We end this section by some technical definitions.
In order to state our main results, we need to define the following function $G$ from $\RR_+$ to $\RR$:
\BEAS
G(\mu) = \frac{\lambda}{(1-p)^2} \EE\Big[ \frac{1}{p + (1-p)\exp(\sqrt{\mu}Z - \mu/2)} - 1 \Big],
\EEAS
where $Z \sim \mathcal{N}(0,1)$. Note that $G$ is also a function of the parameters $\lambda$ and $p$ which are considered as fixed.

\begin{definition}[Spinodal curve]\label{def:spinodal}
	The spinodal curve is defined as the function
\BEA
\label{eq:def_spinodal} \lambda_{sp}: p \mapsto \sup \big\{\lambda \geq 0 \ | \ 0 \text{ is the unique fixed point of } G \big\}.
\EEA
\end{definition}
Let us define
\BEA
\label{eq:defp*}p^*=\frac{1}{2} - \frac{1}{2\sqrt{3}}.
\EEA

The following conjecture shows that $\lambda_{sp}$ is well defined and summarize its main properties.
\begin{conjecture} \label{lem:fixed_points}
  \BIT
\item[(i)] If $\lambda>1$, then $G$ has two fixed points: $0$ and $\alpha > 0$. Moreover, $0$ is unstable and $\alpha$ is stable.
%\item [(ii)] There exists a function $p\mapsto \lambda_{sp}(p)$ such that if $\lambda<\lambda_{sp}$, then $0$ is the unique fixed point of $G$.
\item[(ii)] For $p^* \leq p \leq 1/2$, we have $\lambda_{sp}(p) = 1$.% and we set $\alpha=0$.
  \item[(iii)] For $0\leq p<p^*$, we have $\lambda_{sp}(p)<1$ and if $\lambda_{sp}(p) < \lambda < 1$, then $G$ has three fixed points: $0 < \beta < \alpha$. Moreover, $0$ and $\alpha$ are stable and $\beta$ is unstable.
\EIT
\end{conjecture}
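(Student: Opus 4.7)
The plan is to combine asymptotic analysis at $\mu = 0$ and $\mu = \infty$ with a Taylor expansion at the origin that naturally identifies the threshold $p^*$. At $\mu = 0$ the integrand equals $\frac{1}{p + (1-p)} - 1 = 0$, so $G(0) = 0$. As $\mu \to \infty$, for every fixed realization of $Z$ we have $\sqrt{\mu}Z - \mu/2 \to -\infty$, so by dominated convergence $\EE[1/(p + (1-p)e^{\sqrt{\mu}Z - \mu/2})] \to 1/p$, whence $G(\mu) \to \lambda/[p(1-p)]$. In particular $G$ is bounded and $G(\mu) < \mu$ for all $\mu$ sufficiently large.

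Next I would compute the Taylor expansion of $G$ at $\mu = 0$. Writing $u = \sqrt{\mu}$ and using the Hermite generating function $e^{uZ - u^2/2} = \sum_{k \geq 0} u^k H_k(Z)/k!$ together with the orthogonality $\EE[H_j(Z) H_k(Z)] = k!\, \delta_{jk}$, set $\alpha = (1-p)\sum_{k \geq 1} u^k H_k(Z)/k!$ so that $p + (1-p)e^{uZ - u^2/2} = 1 + \alpha$. Expanding $1/(1+\alpha) = 1 - \alpha + \alpha^2 - \alpha^3 + \alpha^4 + O(u^5)$, using $\EE[\alpha^2] = (1-p)^2(e^{u^2}-1)$, and computing $\EE[\alpha^3]$ and $\EE[\alpha^4]$ to order $u^4$ directly via the multiplication of Hermite polynomials yields
\[G(\mu) = \lambda\mu + \lambda\bigl(\tfrac{1}{2} - 3(1-p) + 3(1-p)^2\bigr)\mu^2 + O(\mu^{5/2}).\]
The quadratic $3(1-p)^2 - 3(1-p) + \tfrac{1}{2}$ vanishes exactly at $1 - p = \tfrac{1}{2} \pm \tfrac{1}{2\sqrt{3}}$; in the range $p \in [0, 1/2]$ the relevant root is $p = p^* = \tfrac{1}{2} - \tfrac{1}{2\sqrt{3}}$, and the coefficient is negative for $p \in (p^*, 1/2]$ and positive for $p \in [0, p^*)$. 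In particular $G'(0) = \lambda$.

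Several parts of the conjecture are then immediate. For (i), $\lambda > 1$ gives $G'(0) > 1$, so $G(\mu) > \mu$ for small $\mu > 0$; combined with $G(\mu) < \mu$ for large $\mu$ this yields at least one positive fixed point $\alpha$, and the fixed point $0$ is unstable. At $\lambda = 1$, the sign of the quadratic coefficient shows that $G(\mu) > \mu$ near $0$ for $p < p^*$, so there is already a positive fixed point for some $\lambda < 1$ and therefore $\lambda_{sp}(p) < 1$; conversely for $p > p^*$ the local expansion is consistent with $\lambda_{sp}(p) = 1$. The three-fixed-point picture in (iii) for $\lambda_{sp}(p) < \lambda < 1$ then corresponds to a saddle-node bifurcation: the fixed point at $0$ is stable since $G'(0) = \lambda < 1$, and a pair of fixed points $\beta < \alpha$ (unstable and stable respectively) coalesces at $\lambda = \lambda_{sp}(p)$.

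The main obstacle is the global analysis. Establishing uniqueness of $\alpha$ in (i), the inequality $G(\mu) \leq \mu$ on all of $\RR_+$ at $\lambda = 1$ for $p \geq p^*$ (equivalent to $\lambda_{sp}(p) = 1$), and the precise count of three fixed points in (iii) all require global control of $G$, not merely its behavior near $0$. A natural route is to show that $G$ is globally concave for $p \geq p^*$ and convex-then-concave for $p < p^*$, equivalently that $G''$ has at most one sign change on $(0, \infty)$; carrying out this structural analysis rigorously for all admissible $(\lambda, p)$ is the delicate step and is why the statement is phrased as a conjecture.
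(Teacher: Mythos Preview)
Your approach is essentially the same as the paper's: both rely on the small-$\mu$ Taylor expansion $G(\mu) \approx \lambda\mu + \tfrac{\lambda}{2}(1-6p(1-p))\mu^2$ to identify $p^*$, and both acknowledge that the global structure of $G$ (uniqueness and exact count of fixed points) is the unresolved piece, which is exactly why the statement is labeled a conjecture and the paper says it was only verified numerically. You in fact go a bit further than the paper by supplying the large-$\mu$ asymptotics $G(\mu)\to\lambda/[p(1-p)]$ and the resulting existence argument for a positive fixed point when $\lambda>1$, which the paper omits.
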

\begin{proof}
  The analysis of the function $G$ seems challenging and we were only able to verify Conjecture~\ref{lem:fixed_points} numerically. The exact value of $p^*$ follows from the following argument: a small $\mu$ expansion of the function $G$ gives
  \BEAS
G(\mu)\approx \lambda \mu +\frac{\lambda}{2}(1-6p(1-p))\mu^2.
\EEAS
In particular, for $\lambda=1$, in order for $G$ to have three fixed points, we need to have
$1-6p(1-p)<0$, i.e. $p<p^*$.
\end{proof}
\begin{figure}[thpb]
  \centering
  \includegraphics[width=8cm]{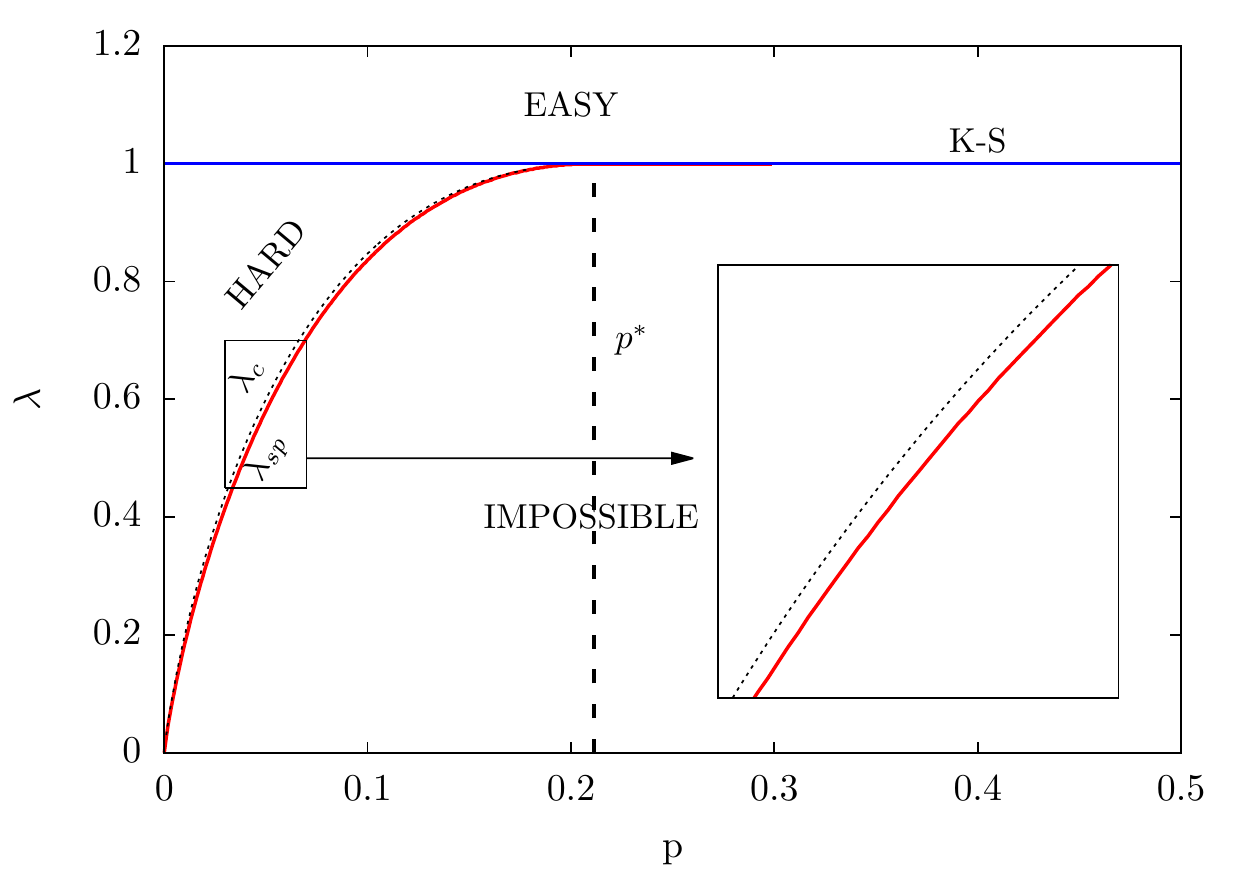}
  \caption{Phase diagram for the asymmetric community detection problem. The easy phase follows from \cite{bordenave2015non}, the impossible phase below the spinodal curve (red curve) is proved in this paper and the hard phase is a conjecture. The dotted curve corresponding to $\lambda_c(p)$ is the conjectured curve for solvability of the community detection problem (see discussion in Section~\ref{sec:nonrec}).}
  \label{fig:phase_diagram}
\end{figure}

We summarize on the following phase diagram (Figure~\ref{fig:phase_diagram}) our main results:
\begin{itemize}
	\item Above the Kesten-Stigum bound (blue line), reconstruction is possible by a local algorithm given that an arbitrary small fraction of the labels is revealed. Moreover, the local algorithms (with this arbitrary small side information) achieve then the best possible performance without side information (see Proposition~\ref{prop:ks}).
	\item Between the blue and the red line, we show that local algorithms are efficient for reconstruction when a certain fraction of labels is revealed (see Proposition~\ref{prop:belowks}).
	\item We show that reconstruction is impossible below the spinodal curve (red line), see Proposition~\ref{prop:below_sp}.
\end{itemize}

\section{Reconstructability above the Kesten-Stigum bound}\label{sec:above}

We first consider the case $\lambda>1$. 
%In this regime, we know that the community detection problem is solvable and we now give an upper bound on the success probability.

\begin{proposition}\label{prop:ks}
	If $\lambda > 1$, then we have
        \BEA
 \label{eq:b}       \limsup_{d\to\infty}\limsup_{n\to \infty}{P}_{succ}(T^{opt}) \leq 2\PP(\Ncal(\alpha/2,\alpha)>0)-1 
        \EEA
        where $\alpha$ is the stable fixed point in Conjecture~\ref{lem:fixed_points} (i).

        Moreover, for all $0<q<1$, we have
        \BEAS
		\liminf_{d\to\infty}\liminf_{n\to \infty}{P}_{succ}(T^{opt}(G,q)) \geq 2\PP(\Ncal(\alpha/2,\alpha)>0)-1.
        \EEAS
\end{proposition}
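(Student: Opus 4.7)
The plan is to pass to the local weak limit of the SBM at a uniformly chosen vertex, which is the labeled Poisson-Galton-Watson tree described in the introduction, and then to read off both bounds from a belief propagation (BP) analysis of this tree in the large-$d$ regime. The reduction of $P_{succ}(T^{opt})$ and $P_{succ}(T^{opt}(G,q))$ to corresponding tree reconstruction quantities is the content of Section~\ref{sec:proofsbm}, and I will assume it here; the heart of the matter is the tree analysis, carried out via the cavity method of Section~\ref{section:cavity_method}.

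For the upper bound I would use the elementary monotonicity that revealing extra information can only increase the Bayes-optimal success probability. Augmenting the graph $G$ with the true labels at depth $n$ of the limiting tree yields
\BEAS
\limsup_{n\to\infty} P_{succ}(T^{opt}) \leq P_n^{tree},
\EEAS
where $P_n^{tree}$ is the optimal success probability for $X_\rho$ on the depth-$n$ truncation of the PGW tree with all depth-$n$ labels revealed. The BP recursion expresses the log-likelihood ratio $L_n$ at the root as a sum over its Poisson-many children, and as $d\to\infty$ each contribution becomes small so that a CLT gives $L_n | \{X_\rho=1\}\Rightarrow\Ncal(\mu_n/2,\mu_n)$, with the Bayesian Wick identity forcing the mean to equal half the variance. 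Symmetrically $L_n | \{X_\rho=2\}\Rightarrow \Ncal(-\mu_n/2,\mu_n)$, and the variance satisfies the deterministic recursion $\mu_{n+1}=G(\mu_n)$, with the explicit shape of $G$ coming out of the Gaussian moment $\EE[(p+(1-p)e^{L_n})^{-1}]$. Initializing with $\mu_0=+\infty$ (full labels at the deepest level) and iterating, $\mu_n$ decreases to the largest fixed point of $G$, which by Conjecture~\ref{lem:fixed_points}(i) is $\alpha$. Substituting $\Ncal(\pm\alpha/2,\alpha)$ into the definition of $P_{succ}$ and using Gaussian symmetry gives exactly $2\PP(\Ncal(\alpha/2,\alpha)>0)-1$.

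For the lower bound with $q>0$, I would exhibit an explicit local algorithm: run BP for $r$ generations on the observed neighborhood of $s_0$, initializing with delta-function messages at the revealed vertices and uniform messages elsewhere. By local weak convergence this reduces, as $n\to\infty$, to BP on the PGW tree with each leaf at depth $r$ independently labeled with probability $q$. For any $q>0$ the initial ``signal'' $\mu_r$ is strictly positive, and because $\lambda>1$ the fixed point $0$ of $G$ is unstable, so iterating $\mu\mapsto G(\mu)$ drives the sequence to the non-zero fixed point $\alpha$ as $r\to\infty$. The same large-$d$ CLT as above then shows that the LLR of this local estimator converges to $\Ncal(\pm\alpha/2,\alpha)$, so its success probability attains $2\PP(\Ncal(\alpha/2,\alpha)>0)-1$; since $T^{opt}(G,q)$ is Bayes-optimal it does at least as well, yielding the claimed lower bound.

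The main obstacle will be justifying the large-$d$ Gaussian limit rigorously along the BP iteration: one must control the CLT remainder uniformly in the depth, verify that the Wick relation between the mean and variance of $L_n$ passes to the limit so that the one-parameter family $\Ncal(\mu/2,\mu)$ is genuinely invariant under the BP operator, and ensure that the correct fixed point of $G$ is selected from each side (the largest fixed point approached from above with full initialization for the upper bound, and the non-zero fixed point reached from a small positive $\mu$ for the lower bound). Secondary technicalities, all handled by the cavity analysis of Section~\ref{section:cavity_method}, include uniform integrability estimates needed to interchange the limits in $d$ and $n$, and the local tree-likeness of SBM neighborhoods used to couple BP on $G$ with BP on the PGW tree.
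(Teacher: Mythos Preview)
Your proposal is correct and follows essentially the same route as the paper: the upper bound comes from the oracle that reveals all labels on the depth-$r$ boundary (the paper's $T^*_r$, corresponding to $q=1$ in the tree recursion), the lower bound from the explicit local BP test $T^{loc}_r$ with a fraction $q$ of boundary labels revealed, and both are analyzed via the large-$d$ Gaussian limit of the cavity recursion with the Nishimori/Wick relation fixing $\Ncal(\pm\mu/2,\mu)$. One minor remark: you list ``controlling the CLT remainder uniformly in the depth'' as an obstacle, but in the paper's order of limits ($n\to\infty$, then $d\to\infty$ at fixed depth $r$, then $r\to\infty$) no such uniformity is needed---the Gaussian approximation is established once per fixed $r$, and only afterward does one let $r\to\infty$ in the deterministic scalar recursion $\mu_{r+1}=G(\mu_r)$.
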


\begin{figure}[thpb]
  \centering
  \includegraphics[width=7cm]{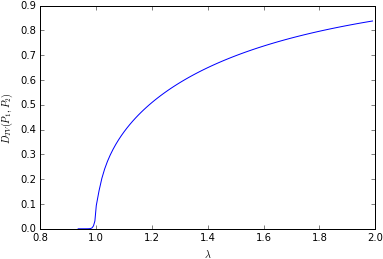}
  \caption{Lower bound for the probability to recover the true label of a typical vertex by an optimal local algorithm with side information for $p>p^*$ as a function of $\lambda$ (i.e.\ function $2\PP(\Ncal(\alpha/2,\alpha)>0)-1$ for $p=0.25>p^*$).}
  \label{fig:1}
\end{figure}

In words, we see that if a vanishing fraction of the labels is revealed, then the probability to recover the true label of a typical vertex by the optimal algorithm is $2\PP(\Ncal(\alpha/2,\alpha)>0)-1$. Indeed, we believe that \eqref{eq:b} should be an equality.
On Figure~\ref{fig:1}, we give a drawing of this curve as a function of $\lambda$ for $p=0.25>p^*$ and on Figure~\ref{fig:2} for $p=0.005<p^*$. Note that at this stage, we only gave an interpretation of the curve for $\lambda>1$. We deal with the case $\lambda<1$ in the next section.

%We will see in the next section that for $p\geq p^*$, the curve is still valid for values of $\lambda<1$ but not for $p<p^*$.

Before that, we give a result which shows that if a vanishing fraction of the labels is revealed then the optimal recovery is achieved by a local algorithm. Similar results in the case where \eqref{eq:egal} does not hold have been proved in \cite{mossel2015density}. %Note that for this result to be true, we do not need $d\to \infty$ and
In the large degree $d$ regime, our result improves Proposition 3 in \cite{kanade2014global} which deals only with the case $p=0.5$ and $\lambda$ larger than a large constant $C$. The fact that local algorithms are very efficient as soon as $q>0$ (even optimal in the limit $q\to 0$) leads to linear time algorithms for community detection (when some labels are revealed). Indeed from a practical perspective, we believe that our analysis carries over to the labeled stochastic block model \cite{heimlicher2012community,lelarge2015reconstruction}. It is then possible to devise new clustering algorithms based on a similarity graph which are shown to be optimal for a wide range of models \cite{saade2016clustering} and also local semi-supervised learning clustering algorithms, see \cite{saade2016fast} for more details in this direction. 

We now define local algorithms. For an integer $t$, a test $T(G,q)=(T_1(G,q),\dots, T_n(G,q))$ is $t$-local if each $T_i(G,q)$ is a function of the graph $B_t(G,i)$ induced by the vertices of $G$ whose distance from $i$ is at most $t$. We denote by $Loc_t$ the set of $t$-local tests and by $Loc=\cup_{t\geq 0}Loc_t$ the set of local tests.

\begin{proposition} \label{prop:lower_bound_ks}
  If $\lambda >1$, then we have for all $0 < q \leq 1$,
  \BEAS
\sup_{T\in Loc}\lim_{d\to\infty}\lim_{n\to \infty}{P}_{succ}(T(G,q)) \geq 2\PP(\Ncal(\alpha/2,\alpha)>0)-1,
  \EEAS
  where $\alpha$ is the stable fixed point in Conjecture~\ref{lem:fixed_points} (i).
\end{proposition}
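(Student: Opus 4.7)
The plan is to exhibit a single explicit local test, depth-$t$ belief propagation $T^{BP}_t \in Loc_t$, and to establish
$$
\lim_{t\to\infty}\lim_{d\to\infty}\lim_{n\to\infty} P_{succ}(T^{BP}_t(G,q)) = 2\PP(\cN(\alpha/2,\alpha)>0)-1.
$$
Since $T^{BP}_t \in Loc$ for every $t$, this descends to the supremum in the statement. The starting point is local-weak convergence: for a uniformly random vertex $s_0$, the pair consisting of the ball $B_t(G,s_0)$ and the labels revealed inside it converges in distribution as $n\to\infty$ to the depth-$t$ truncation of the labeled multi-type Poisson-Galton-Watson tree in which, given a parent of type $i$, the numbers of type-$1$ and type-$2$ children are independent $\Poi(dR_{i1})$ and $\Poi(dR_{i2})$ variables, with each label independently revealed with probability $q$. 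Since $T^{BP}_t$ is a measurable functional of this local neighborhood, the computation of $\lim_n P_{succ}(T^{BP}_t(G,q))$ reduces to its tree counterpart.

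On the tree, $T^{BP}_t$ is Bayes-optimal among $t$-local rules: its performance is $D_{TV}$ between the two conditional laws of the observed neighborhood, equivalently a functional of the posterior log-likelihood-ratio $\Delta_t$ at the root. Tree-Markovianity yields an additive recursion for $\Delta_t$ across children, each unrevealed child contributing the BP message from its subtree and each revealed child contributing an explicit log-ratio of transition probabilities. In the scaling $d\to\infty$ with $\lambda=d(1-b)^2$ fixed, each individual contribution is of order $1/\sqrt d$, and a central-limit argument together with the Nishimori identity forces $\Delta_t$ conditional on $X_{s_0}=i$ to converge in distribution to $\cN\bigl((-1)^{i-1}\mu_t/2,\,\mu_t\bigr)$, where $\mu_t$ satisfies a one-dimensional recursion whose unperturbed part is precisely the function $G$ from Definition~\ref{def:spinodal}. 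This reduction is the content of the cavity analysis in Section~\ref{section:cavity_method}. Plugging the limiting Gaussian form into the formula for $P_{succ}$ yields
$$
\lim_{d\to\infty}\lim_{n\to\infty} P_{succ}(T^{BP}_t(G,q)) = 2\PP(\cN(\mu_t/2,\mu_t)>0)-1.
$$

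To iterate in $t$, observe that any $q>0$ forces the revealed labels inside the ball to contribute a strictly positive additive perturbation to each step of the recursion, so $\mu_t$ stays bounded below by some $\mu_*(q)>0$ for all $t\geq 1$. Under $\lambda>1$, Conjecture~\ref{lem:fixed_points}(i) states that $G$ has $0$ as an unstable fixed point and $\alpha$ as the unique positive, stable fixed point; combined with monotonicity of the iteration---visible from the integral form of $G$---this forces $\mu_t\to \alpha$ as $t\to\infty$, proving the announced bound (by monotonicity in $t$ of $\lim_d\lim_n P_{succ}(T^{BP}_t(G,q))$, which follows from the fact that depth-$(t{+}1)$ BP dominates depth-$t$ BP).

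The principal technical obstacle is the reduction to the one-dimensional Gaussian recursion in the large-degree limit. Delicate points include handling the random Poisson number of contributing children, establishing asymptotic independence of messages coming from sibling subtrees, and obtaining tail control uniform in the depth $t$ so that the $d\to\infty$ and $t\to\infty$ limits can safely be interchanged. This is the core purpose of Section~\ref{section:cavity_method}, on which the present proposition essentially rests.
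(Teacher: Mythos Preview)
Your proposal is correct and follows essentially the same route as the paper: exhibit the local BP test $T^{loc}_r$, use local weak convergence (Theorem~\ref{th:sbm_to_gw} / Proposition~\ref{prop:convergence_xi}) to reduce $\lim_n P_{succ}$ to the tree problem, invoke the Gaussian cavity analysis of Section~\ref{section:cavity_method} (Lemmas~\ref{lem:init}--\ref{lem:iter}) to obtain $\lim_d\lim_n P_{succ}(T^{loc}_r)=2\PP(\cN(\mu_r/2,\mu_r)>0)-1$, and then use Conjecture~\ref{lem:fixed_points}(i) to get $\mu_r\to\alpha$ since $q>0$ forces $\mu_1>0$.

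One small difference worth noting: the paper's test uses only the revealed labels on the \emph{boundary} $\partial[G]_r$, which yields the clean recursion $\mu_1=q\lambda/(p(1-p))$, $\mu_{k+1}=G(\mu_k)$ with no perturbation at later steps; you instead speak of using all revealed labels inside the ball, which would add a positive perturbation at every iterate. Both versions lead to $\mu_r\to\alpha$ under $\lambda>1$, but the paper's choice makes the recursion literally the fixed-point iteration of $G$, so the appeal to Conjecture~\ref{lem:fixed_points}(i) is immediate and no separate monotonicity argument or limit interchange is needed.
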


Note in particular that as a vanishing fraction of labels is revealed, i.e. $q\to 0$, the best local algorithm performs at least as well as the optimal algorithm with no revealed labels. An explicit description of an optimal local test is given in the proof of this proposition.

%If we consider the community detection problem, where a fraction $0<q<1$ of the true labels is revealed, then the bound of Proposition~\ref{prop:ks} is achieved, as shown in the following proposition.

%\begin{proposition}\label{prop:q_ks}
%	If $\lambda > 1$, then for all $0< q \leq 1$ we have
%        \BEAS
%		\lim_{d\to\infty}\lim_{n\to \infty}{P}_{succ}(T^{opt}(G,q)) \geq 2\PP(\Ncal(\alpha/2,\alpha)>0)-1 
%        \EEAS
%        where $\alpha$ is the stable fixed point in Lemma~\ref{lem:fixed_points} (i).
%\end{proposition}

\section{Non-reconstructability below the spinodal curve}\label{sec:nonrec}

We now state our second main result:

\begin{proposition} \label{prop:below_sp}
	If $\lambda < \lambda_{sp}(p)$ then 
	$$
	\lim_{d \to \infty} \lim_{n \to \infty} P_{succ}(T^{opt}) = 0
	$$
\end{proposition}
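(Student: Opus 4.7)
The plan is to reduce the SBM community detection problem to the reconstruction problem on the limiting labeled Poisson Galton-Watson tree with revealed leaves, and then to prove that the latter is impossible below the spinodal curve via a belief-propagation fixed-point argument in the large-$d$ limit.

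Starting from Proposition~\ref{prop:equivalence_dtv_psucc}, we have $P_{succ}(T^{opt}) = D_{TV}(P_1,P_2)$. The SBM converges locally to a labeled Poisson Galton-Watson tree $\cT$ with offspring mean $d$ and label-transition matrix $R$, so the law of the $t$-neighbourhood of a uniformly chosen vertex $s_0$ is close to that of $\cT$ truncated at depth $t$. Moreover, an information-monotonicity argument shows that observing the entire SBM graph is no more informative about $X_{s_0}$ than observing the truncated tree $\cT$ together with its exact labels at depth $t$; indeed, conditional on the complement of $B_t(G,s_0)$, the labels on the boundary have a posterior that is only a noisier version of their true values. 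Writing $Q_i^{(t)}$ for the law of the truncated labeled tree with revealed depth-$t$ leaves conditional on the root label being $i$, this yields
$$\limsup_{n\to\infty} P_{succ}(T^{opt}) \leq D_{TV}\bigl(Q_1^{(t)}, Q_2^{(t)}\bigr),$$
so it suffices to show the right-hand side tends to $0$ as $t\to\infty$ and then $d\to\infty$.

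On the tree, let $L_t$ denote the log-likelihood ratio of the root's label given the tree structure and the depth-$t$ labels. Conditioning on the root's children yields the distributional recursion
$$L_{t+1} \stackrel{d}{=} \log\frac{p}{1-p} + \sum_{v=1}^{N} h(L_t^{(v)}),$$
where $N\sim\Poi(d)$, the $L_t^{(v)}$ are i.i.d.\ copies of $L_t$, and $h$ is the explicit log-likelihood function of one parent-child edge parametrised by the entries of $R$. In the regime of interest $1-b = O(1/\sqrt{d})$, so each summand is small; a Taylor expansion of $h$ around zero combined with a central-limit-type argument applied to the $\Poi(d)$ sum shows that, conditional on root label $1$, $L_t$ is asymptotically Gaussian $\cN(\mu_t/2,\mu_t)$ with variance $\mu_t$ satisfying $\mu_{t+1}=G(\mu_t)$. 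The explicit form of $G$ is recovered from the Gaussian moment generating function that appears when taking the second moment of the recursion.

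Finally, tree reconstruction with fully revealed leaves corresponds to initialising the recursion at $\mu_0=+\infty$. By the very definition of $\lambda_{sp}$, the hypothesis $\lambda<\lambda_{sp}(p)$ ensures that $G$ admits $0$ as its unique fixed point on $\RR_+$; together with the monotonicity of $G$ (standard for BP recursions in this type of symmetric model) and the fact that $G(\mu)<\mu$ for all $\mu>0$ (which follows from $G'(0)=\lambda<1$ and the intermediate value theorem applied to $G-\mathrm{id}$), the iterates $\mu_t$ are non-increasing and converge to $0$. Consequently $L_t\to 0$ in probability, $D_{TV}(Q_1^{(t)},Q_2^{(t)})\to 0$, and combined with the monotonicity step above this yields the desired $\lim_{d\to\infty}\lim_{n\to\infty} P_{succ}(T^{opt})=0$. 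The main obstacle will be making the Gaussian approximation of the BP messages uniform in $t$ in the large-$d$ limit: one must bound the CLT errors for the Poisson sum tightly enough that they do not accumulate across iterations of the distributional recursion, which is the substantial cavity computation apparently carried out in Section~\ref{section:cavity_method}.
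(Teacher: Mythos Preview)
Your approach matches the paper's: the oracle $T^*_r$ that reveals all labels on $\partial[G]_r$, local convergence to the labeled Galton--Watson tree, the cavity recursion, the Gaussian limit $\mu_{t+1}=G(\mu_t)$ (Lemmas~\ref{lem:init} and~\ref{lem:iter}), and then $\mu_\infty=0$ when $0$ is the only fixed point of $G$. One clarification on the order of limits: the paper takes $n\to\infty$, then $d\to\infty$ \emph{for each fixed depth $t$}, and only afterwards $t\to\infty$; consequently Section~\ref{section:cavity_method} establishes the Gaussian approximation only pointwise in $t$, and your closing worry about making the CLT errors uniform across BP iterations is unnecessary --- the argument simply does not require it. (Incidentally, the paper does not prove the monotonicity of $G$ or the fixed-point structure you invoke; these are stated as Conjecture~\ref{lem:fixed_points} and verified only numerically.)
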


If $\lambda_c(p)=\inf\{\lambda,\: \mbox{community detection is solvable}\}$ denotes the solvability threshold, then Proposition~\ref{prop:below_sp} implies that $\lambda_{sp}(p)\leq \lambda_c(p)$. Moreover thanks to \cite{bordenave2015non}, the Kesten Stigum threshold is an upper bound on the solvability threshold so that we have $\lambda_c(p)\leq 1$.
For $p\geq p^*$ defined in \eqref{eq:defp*}, the spinodal curve is equal to the Kesten-Stigum threshold by Conjecture~\ref{lem:fixed_points} (ii), so that we have in this case $\lambda_c(p)=1$ and moreover as soon as the community detection problem is solvable, it is solvable in polynomial time thanks to the results in \cite{bordenave2015non}. Figure~\ref{fig:1} is valid for $\lambda<1$. 
However for $p<p^*$, there is a gap between the spinodal curve and Kesten-Stigum threshold and we conjecture that $\lambda_{sp}(p)<\lambda_c(p)<1$, see Figure~\ref{fig:phase_diagram}. 
In the case of dense graphs (where the average degree $d$ is of order $n$), the value of $\lambda_c(p)$ has been computed in the recent works \cite{krzakala2016lowrank} and \cite{barbier2016mutual}.
We conjecture that their expression (used in Figure~\ref{fig:phase_diagram}) is still valid for sparse graphs in the large degree regime.

\begin{figure}[thpb]
  \centering
  \includegraphics[width=7cm]{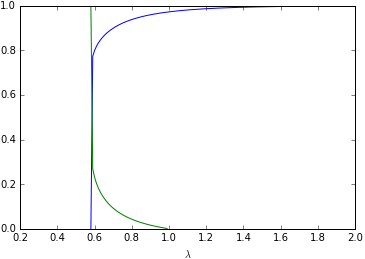}
  \caption{Necessary fraction of revealed labels (green) and corresponding lower bound of probability to recover the true label of a typical vertex by an optimal local algorithm (blue) for $p<p^*$, i.e.\ functions $\frac{\beta p(1-p)}{\lambda}$ (in green) and $2\PP(\Ncal(\alpha/2,\alpha)>0)-1$ (in blue) appearing in Proposition~\ref{prop:belowks} as a function of $\lambda$ for $p=0.05<p^*$.}
  \label{fig:2}
\end{figure}

\begin{proposition}\label{prop:belowks}
  Consider the case where $p<p^*$ and $\lambda_{sp}(p)<\lambda<1$.
  As soon as $q>\frac{\beta p(1-p)}{\lambda}$, we have 
\BEAS
\lefteqn{\lim_{d \to \infty}\liminf_{n \to \infty} P_{succ}(T^{opt}(G,q))}\\
&\geq& \sup_{T\in Loc}\lim_{d \to \infty}\liminf_{n \to \infty} P_{succ}(T(G,q))\\
&\geq& 2\PP(\Ncal(\alpha/2,\alpha))-1,
\EEAS
where $\alpha$ and $\beta$ are the fixed points defined in Conjecture~\ref{lem:fixed_points} (iii).
%Moreover, for $\lambda_{sp}(p)<\lambda<1$, we have
%\BEAS
%\lim_{q\to 0}\sup_{T\in Loc}\lim_{d \to \infty}\liminf_{n \to \infty} P_{succ}(T(G,q))=0.
%\EEAS
\end{proposition}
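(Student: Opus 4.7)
The first inequality is immediate: by construction $T^{opt}(G,q)$ is the Bayes-optimal estimator of $X$ given $(G,Y)$, so it dominates every $T\in Loc$ for the criterion $P_{succ}$. The content of the proposition is therefore the second inequality, for which I would exhibit, for each radius $t\ge 1$, the $t$-local test $T^{(t)}(G,q)$ that outputs at a vertex $v$ the sign of the log-likelihood ratio
\BEAS
\Lambda_t(v) \;=\; \log\frac{\PP(X_v = 1 \mid B_t(G,v),\, Y\cap B_t(G,v))}{\PP(X_v = 2 \mid B_t(G,v),\, Y\cap B_t(G,v))} \;-\; \log\frac{p}{1-p},
\EEAS
computed as if $B_t(G,v)$ were a sample from the limiting tree model $T_d$. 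This is the natural $t$-local analogue of $T^{opt}$, with decision threshold placed at $0$ on the LLR.

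Local convergence of the SBM (with its revealed labels) to the labeled Poisson Galton-Watson tree $T_d$ of mean offspring $d$ and transition matrix $R$, each node independently revealed with probability $q$, implies that the joint law of $(\Lambda_t(s_0), X_{s_0})$ at a uniform vertex $s_0$ converges as $n\to\infty$ to its analogue on $T_d$. The crux is then the cavity analysis of $\Lambda_t$ on $T_d$ as $d\to\infty$, which is the subject of Section~\ref{section:cavity_method}. Writing $\mu_s$ for the class-conditional variance of the BP message at depth $s$ with revealed boundary at depth $t$, a CLT applied to the Poisson superposition of per-child LLR increments of size $O(1/\sqrt{d})$ yields in the limit a Gaussian law, which by the Nishimori identity must take the form $\Ncal(\pm\mu_s/2,\mu_s)$ and satisfies $\mu_{s-1} = G(\mu_s)$ with $G$ as in Definition~\ref{def:spinodal}.

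The boundary value $\mu_t$ is produced by one BP step from revealed leaves. Using the scalings $1-b = \sqrt{\lambda/d}$, $a-b = (1-b)/p$ and $c-b = (1-b)/(1-p)$ forced by \eqref{eq:egal} and \eqref{eq:deflambda}, a second-order expansion of the Poisson sum of per-child LLR increments shows that, conditional on the parent class being $1$, the accumulated LLR at depth $t-1$ has variance converging to $q\lambda/(p(1-p))$ and mean converging to half that quantity, confirming $\mu_t = q\lambda/(p(1-p))$ together with the Nishimori relation. The hypothesis $q > \beta p(1-p)/\lambda$ is then precisely $\mu_t > \beta$.

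By Conjecture~\ref{lem:fixed_points} (iii), $G$ is monotone on $\RR_+$ with fixed points $0 < \beta < \alpha$, the outer two stable and $\beta$ unstable, so $G(x) > x$ on $(\beta,\alpha)$. Iterating $t$ times upward from $\mu_t$ therefore produces an increasing sequence $\mu_t < \mu_{t-1} < \cdots < \mu_0 < \alpha$ that converges to $\alpha$ as $t\to\infty$; consequently $\Lambda_t(s_0) \Rightarrow \Ncal(\pm\alpha/2,\alpha)$ conditionally on $X_{s_0}$, and the sign rule delivers $P_{succ}(T^{(t)}) \to 2\PP(\Ncal(\alpha/2,\alpha)>0) - 1$ in the combined limit $n\to\infty$, $d\to\infty$, $t\to\infty$. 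The main obstacle is the uniform control of the Gaussian approximation to the cavity recursion as $d\to\infty$ (uniformly in $s\le t$), so that the monotone convergence of the deterministic $\mu_s$-recursion can be lifted to convergence in law of $\Lambda_t(s_0)$ itself; this is the content of Section~\ref{section:cavity_method}, combined with the qualitative picture of the fixed points supplied by Conjecture~\ref{lem:fixed_points}.
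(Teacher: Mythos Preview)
Your proof is correct and follows essentially the same route as the paper: exhibit the $t$-local likelihood-ratio test (the paper's $T^{loc}_r$ of \eqref{eq:def_tloc}), pass to the tree limit via local convergence (Proposition~\ref{prop:convergence_xi}), invoke the Gaussian cavity analysis of Section~\ref{section:cavity_method} to reduce to the scalar recursion $\mu_{k+1}=G(\mu_k)$ with initialization $\mu_1=q\lambda/(p(1-p))$, and use Conjecture~\ref{lem:fixed_points} (iii) to conclude $\mu_k\to\alpha$ once $\mu_1>\beta$. One small overstatement: the Conjecture asserts only the number and stability of the fixed points of $G$, not that $G$ is monotone on $\RR_+$; the paper is equally informal on this dynamical step, simply writing ``$(\mu_k)$ converges to the fixed point $\alpha$''.
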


In the regime of Proposition~\ref{prop:belowks} ($\lambda_{sp}(p)<\lambda<1$ and $q>\frac{\beta p(1-p)}{\lambda}$), we believe that local algorithms are indeed optimal. Figure~\ref{fig:2} illustrates the case $p<p^*$ with $p=0.05$ for which we have $\lambda_{sp}(0.05)\approx 0.58$. 
%In particular, we see that the performance of local algorithms with a vanishing fraction of revealed labels experiences a large jump at the Kesten-Stigum threshold $\lambda=1$. 
Also, if the number of revealed entries is sufficiently high (i.e.\ above the green curve) then local algorithms provide a great improvement in the probability of successfully recovering the label of a typical vertex (the blue curve). An explicit description of a local algorithm achieving the lower bound in Proposition~\ref{prop:belowks} is provided in the proof. %There are natural improvement of this local algorithm which should provide an improvement of the success probability so that we do not believe that the lower bound on the success probability obtained in Proposition~\ref{prop:belowks} is tight.

\section{Reconstruction on trees}\label{sec:pbrectree}
%\section{Cavity recursion on trees}

In this section we are going to state the analogous of the well known local convergence of the Erd\H{o}s-R\'enyi random graph towards the Galton-Watson branching process, in term of labeled graphs. The labeled stochastic block model $(G,X)$ will converges locally towards a random labeled tree. We have to introduce first the notion of pointed labeled graphs.

\begin{definition}[Pointed labeled graphs]
	\begin{itemize}
		\item A pointed labeled graph is a triple $G=(g,s_0,x)$ where $g$ is a countable, locally finite and connected graph, $s_0$ is a distinguished vertex of $g$ called the root of the graph and $x=(x_s)_{s \in V_g} \in \{1,2\}^{V_g}$ are the labels of the vertices.

		\item Two pointed labeled graphs are equivalent if there exists a graph isomorphism between them, that preserves the root and the labels.

		\item We define, for $r \in \N$, $[G]_r$, the ball of radius $r$ of $G$, as the pointed labeled graph induced by the root of $G$ and all the vertices at distance at most $r$ from the root.
	\end{itemize}
\end{definition}

The randomly rooted stochastic block model $(G,s_0,X)$ is therefore a random pointed labeled graph, that we will denote $SBM_n$ from now. We will also be interested in a second family of random pointed labeled graphs, that will corresponds to the local limits of stochastic block models.

\begin{definition}[Labeled Poisson Galton-Watson branching process]
	Let $A = 
	\begin{pmatrix}
		\delta & 1-\delta \\
		1-\delta' & \delta' \\
	\end{pmatrix}$ 
	(where $\delta, \delta' \in [0,1]$) be a transition matrix.
	The labeled Poisson Galton-Watson branching process with parameters $(A,p,d)$ is a random pointed labeled graphs $(T,s_0,X)$, where
	\begin{itemize}
		\item $(T,s_0)$ is a Galton-Watson tree with offspring distribution $\Poi(d)$ rooted at $s_0$.
		\item The labels $X$ of the vertices of $T$ are then chosen as follows:
			\begin{enumerate}
				\item
					The label of the root $X_{s_0} \in \{1,2\}$ is chosen accordingly to $\PP(X_{s_0} = 1) = p$.
				\item
					Given the label $X_{p}$ of the parent $p$ of a node $s$, the
					probability that $X_s = i \in \{1,2\}$ is equal to $A_{X_p, i}$
					independently from all other random variables.
			\end{enumerate}
	\end{itemize}
\end{definition}

In the following, we will denote $GW = (T,s_0,X)$, the labeled Galton-Watson branching process with parameters $(R,d,p)$ with $R$ defined by \eqref{eq:defR}. The next result state that $SBM_n$ converges locally toward GW.

\begin{theorem} \label{th:sbm_to_gw}
	Let $F$ be a (positive or bounded) function of pointed labeled graphs, that depends only on the ball of radius $r$. Then
	$$
	\E \big[ F(SBM_n) \big] \xrightarrow[n \to \infty]{} \E \big[ F(GW) \big]
	$$
\end{theorem}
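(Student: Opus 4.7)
Since the space of (equivalence classes of) pointed labeled graphs of radius at most $r$ is countable and the map $G \mapsto [G]_r$ takes values in it, it suffices, by a standard reduction, to establish pointwise convergence of distributions: for every finite pointed labeled graph $H$ of radius at most $r$,
\begin{equation*}
\PP\bigl([SBM_n]_r = H\bigr) \;\longrightarrow\; \PP\bigl([GW]_r = H\bigr).
\end{equation*}
Then the conclusion for positive or bounded $F$ follows by dominated/monotone convergence (or directly by summing over the finitely many $H$'s of bounded size and truncating).

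The plan is to couple the breadth-first exploration of the ball $[SBM_n]_r$ centered at $s_0$ with the breadth-first construction of $[GW]_r$. Because $s_0$ is uniform on $\{1,\dots,n\}$ and independent of $X$, its label has distribution $\mathrm{Bernoulli}(p)$ on $\{1,2\}$, matching the law of the root label of $GW$. Now consider one exploration step at a vertex $v$ of revealed label $i$. Given all the information revealed so far, the number of neighbors of $v$ of label $j$ among the not-yet-explored vertices follows independently (over $j\in\{1,2\}$) a $\mathrm{Bin}(n_j - O_r(1), M_{ij})$ distribution, where $n_j$ is the initial number of class-$j$ vertices in $G$. By the strong law of large numbers $n_j/n \to p_j$ a.s., with $p_1 = p$, $p_2 = 1-p$, and since $M_{ij} = (d/n)\,R_{ij}/p_j$ (as can be checked directly from \eqref{eq:M} and \eqref{eq:defR}), the Binomial parameter satisfies $(n_j - O_r(1))\cdot M_{ij} \to d R_{ij}$. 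Thus
\begin{equation*}
\mathrm{Bin}\bigl(n_j - O_r(1),\,M_{ij}\bigr) \;\xrightarrow[n\to\infty]{(d)}\; \Poi(d R_{ij}),
\end{equation*}
with total variation distance $O(1/n)$ by standard Poisson approximation. By independence, the total offspring count is asymptotically $\Poi(d)$ and, conditionally on its value, the labels are i.i.d.\ with distribution $R_{i,\cdot}$, which is precisely the offspring rule defining $GW$.

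Finally, iterate this one-step coupling for the $O_r(1)$ expected exploration steps needed to build the radius-$r$ ball. Two sources of coupling error must be controlled: (a) the Binomial-to-Poisson total variation cost, which is $O(1/n)$ per step; and (b) the probability that the exploration produces a cycle (i.e.\ that a newly revealed edge reattaches to an already explored vertex), which is again $O(1/n)$ per step since the explored set has bounded size. Summing over a bounded number of steps, the total variation distance between $[SBM_n]_r$ and $[GW]_r$ is $o(1)$, yielding the desired convergence. The main technical point, and the only place where the argument is not purely formal, is to justify that the exploration indeed terminates within $O_r(1)$ steps with probability tending to $1$, which is a direct consequence of the finiteness of the Poisson offspring distribution and a Markov bound on the size of the exploration tree.
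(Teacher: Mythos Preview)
The paper does not actually supply a proof of this theorem; it is stated as a standard local weak convergence result and then used without further justification. So there is nothing in the paper to compare your argument against directly.

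Your proposal is the standard exploration--coupling proof of local convergence of sparse inhomogeneous random graphs to multitype branching processes, and the outline is correct. A few small remarks. First, the identity $M_{ij} = (d/n)\,R_{ij}/p_j$ you rely on is indeed right (it is equivalent to the condition \eqref{eq:egal}), so the Binomial mean converges to $dR_{ij}$ as claimed. Second, the ``$n_j/n \to p_j$ a.s.'' step should be phrased in probability (or you should condition on the label sequence), since there is no natural joint coupling of the $SBM_n$ across $n$; this is cosmetic. Third, the reduction in your first paragraph is clean for \emph{bounded} $F$, but for merely \emph{positive} (possibly unbounded) $F$, pointwise convergence of the law on a countable space only gives $\liminf_n \E[F(SBM_n)] \geq \E[F(GW)]$ by Fatou; the reverse inequality needs a uniform integrability or truncation argument that you gesture at but do not carry out. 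In the paper this theorem is only ever invoked with bounded $F$, so the gap is harmless for the applications, but you should either restrict the statement or supply the missing tightness bound (e.g.\ a uniform tail estimate on $|[SBM_n]_r|$ via domination by a branching process with slightly larger mean).
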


Theorem~\ref{th:sbm_to_gw} leads us to study the reconstruction problem on random trees, which will be connected later to the community detection problem. %is slightly different. Indeed, the observation of only the Galton-Watson rooted tree $(T,s_0)$ does not contain any information about the labels. Therefore we define other notions of solvability for the label reconstruction on trees in the next section. %This problem as been studied in \cite{mossel2004survey} in the case of regular trees. We extend here the definitions and results of \cite{mossel2004survey} to Poisson Galton Watson random trees.
A main ingredient for our proof will be the analysis of this well-studied problem of reconstruction on trees \cite{evans2000broadcasting,mossel2003information,mossel2004survey}. In the rest of this section, we define the reconstruction problem on trees and give the required results.

We consider here $GW=(T,s_0,X)$ the labeled Poisson Galton-Watson branching process with parameters $(R,p,d)$. We denote $L_n = \{ v \in V_T \ | \ d(s_0,v) =n \}$, the set of vertices at distance $n$ from the root. We define then $X^{(n)}= (X_s)_{s \in L_n}$ and $c^{(n)} = (c_1^{(n)}, c_2^{(n)}) = (\# \{ s \in L_n \ | \ X_s =i \})_{i=1,2}$. 
We also define a random subset $E_n$ of the node at depth $n$ as follows: let $q \in [0,1]$ and for $n \in \N$, let $E_n$ be the random subset of $L_n$ obtained by including in $E_n$ each vertex $s\in L_n$ independently with probability $q$. 

We have three kinds of reconstruction problems.

\begin{definition}[Solvability, q-solvability and census solvability]
	We say that the reconstruction problem is solvable if $$\liminf_{n \to \infty} D_{TV}(P^{(n)}_1, P^{(n)}_2) > 0,$$ where $P^{(n)}_i$ denotes the conditional distribution of $X^{(n)}$ given $X_{s_0}=i$. One defines analogously q-solvability (respectively census solvability) by replacing $P^{(n)}_i$ by $P^{(n,q)}_i$ (respectively $\tilde{P}^{(n)}_i$), the conditional distribution of $(E_n,(X_s)_{s \in E_n})$ (respectively $c^{(n)}$) given $X_{s_0}=i$.
\end{definition}

Solvability corresponds thus to the special case $q=1$. Obviously, census solvability and q-solvability imply solvability, but we will see that solvability does not always imply census solvability.

Similarly to the stochastic block model case, this characterization of solvability in term of total variation can be rewritten in term of the maximal achievable success probability for the estimation of $X_{s_0}$, given $X^{(n)}$ (or $c^{(n)}$). We define the rescaled success probability of an estimator $F$ as
\begin{align*}
	P_{succ}(F) &= \PP(F(X^{(n)}) = 1 | X_{s_0} = 1) \\
				&+ \PP(F(X^{(n)}) = 2  | X_{s_0} = 2) - 1.
\end{align*}

The maximal rescaled success probability is then defined as $\Delta_n = \sup_{F} P_{succ}(F)$ where the supremum is taken over all measurable function of $X^{(n)}$. Even though we defined these quantities for the solvability problem, these definitions and the following result can be straightforwardly extended to q-solvability and census-solvability. The following lemma is the analog of Proposition~\ref{prop:equivalence_dtv_psucc}.

\begin{lemma} \label{lem:dtv_psucc}
	$$
	\Delta_n = D_{TV}(P^{(n)}_1,P^{(n)}_2) 
	$$
\end{lemma}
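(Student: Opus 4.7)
The plan is to mirror verbatim the argument used for Proposition~\ref{prop:equivalence_dtv_psucc}, only replacing the graph $G$ by the depth-$n$ label vector $X^{(n)}$. The underlying reason this works is that we are estimating a binary variable $X_{s_0}\in\{1,2\}$ from an observation, and for this task the optimum of (rescaled) success probability equals twice the total-variation distance between the two conditional distributions, regardless of what the observation is.

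First I would identify the set of estimators. Since $X^{(n)}$ takes values in some countable space and the target is $\{1,2\}$, every measurable estimator $F$ of $X_{s_0}$ based on $X^{(n)}$ is of the form $F=1+\ind_A$ for some measurable set $A$ of label configurations. Plugging this into the definition of $P_{succ}(F)$ gives
\BEAS
P_{succ}(F) &=& \PP(X^{(n)}\in A^c\mid X_{s_0}=1)+\PP(X^{(n)}\in A\mid X_{s_0}=2)-1\\
&=& P_1^{(n)}(A^c)+P_2^{(n)}(A)-1\\
&=& P_2^{(n)}(A)-P_1^{(n)}(A).
\EEAS

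Next, taking the supremum over all measurable $A$ yields exactly $D_{TV}(P_1^{(n)},P_2^{(n)})$ by the variational characterization of total variation (the same one used inside the proof of Proposition~\ref{prop:equivalence_dtv_psucc}). This gives $\Delta_n \le D_{TV}(P_1^{(n)},P_2^{(n)})$. The matching lower bound is obtained by exhibiting the optimizer: take $A^\star=\{x:P_2^{(n)}(x)\ge P_1^{(n)}(x)\}$ (the analog of the likelihood-ratio estimator $T^{opt}$), for which $P_2^{(n)}(A^\star)-P_1^{(n)}(A^\star)=D_{TV}(P_1^{(n)},P_2^{(n)})$.

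There is really no obstacle here: the only mildly delicate point is making sure the class of estimators considered in the definition of $\Delta_n$ indeed matches the class $\{1+\ind_A\}$, which is immediate since $X_{s_0}$ is two-valued. The same argument extends to the $q$-solvability case (replace $X^{(n)}$ by $(E_n,(X_s)_{s\in E_n})$ and $P_i^{(n)}$ by $P_i^{(n,q)}$) and to census solvability (replace by $c^{(n)}$ and $\tilde P_i^{(n)}$) without any change, which is why the authors record the result in this form.
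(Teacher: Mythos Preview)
Your proposal is correct and is exactly what the paper intends: the paper does not give a separate proof of Lemma~\ref{lem:dtv_psucc} but explicitly states that it is ``the analog of Proposition~\ref{prop:equivalence_dtv_psucc}'', whose proof is precisely the computation you wrote (identify estimators with $1+\ind_A$, compute $P_{succ}$ as $P_2^{(n)}(A)-P_1^{(n)}(A)$, and take the supremum over $A$). Your remark that the argument carries over unchanged to the $q$-solvability and census cases is also in line with the paper's comment that these extensions are straightforward.
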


We recall here the census-solvability criterion for our particular case (which is a straightforward extension of the results presented in \cite{mossel2004survey}).

\begin{theorem} \label{th:KS1}
  We consider the Poisson Galton-Watson branching process with parameter $(R,p,d)$.
  If $\lambda > 1$, then the problem is census-solvable and q-solvable for all $0 < q \leq 1$. If $\lambda < 1$, then the problem is not census-solvable.
\end{theorem}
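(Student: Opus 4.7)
This is the Kesten--Stigum dichotomy specialized to the two-type Poisson branching process $GW$; my plan follows the classical multitype recipe, handling the two regimes separately.

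\textbf{Positive direction ($\lambda>1$).} A short computation shows that the right eigenvector of $R$ for the subdominant eigenvalue $1-b$ is $v=(1-p,-p)^\top$. Define the linear statistic and martingale
\[Z_n=(1-p)c_1^{(n)}-pc_2^{(n)},\qquad W_n=\frac{Z_n}{(d(1-b))^n}.\]
Using Poisson-splitting (each type-$i$ vertex has independent $\Poi(dR_{ij})$ type-$j$ children), $W_n$ is a martingale with respect to the filtration $\cF_n$ generated by the first $n$ generations under each conditional law $\PP(\,\cdot\mid X_{s_0}=i)$; a direct spectral computation using $R^n = P_1+(1-b)^n P_2$ yields
\[\EE[W_n\mid X_{s_0}=1]=1-p,\qquad \EE[W_n\mid X_{s_0}=2]=-p,\]
so the two conditional means differ by $1$ for every $n$. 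The conditional variance of $Z_{n+1}$ given $\cF_n$ is a linear combination of $c_1^{(n)}$ and $c_2^{(n)}$ with coefficients of order $d$; using $\EE[|c^{(n)}|]=d^n$ gives the variance increment
\[\mathrm{Var}(W_{n+1})-\mathrm{Var}(W_n) = O\bigl(d^{n+1}/(d(1-b))^{2(n+1)}\bigr) = O(\lambda^{-(n+1)}),\]
which is summable precisely when $\lambda>1$. In that regime $W_n$ is bounded in $L^2$ and converges a.s.\ and in $L^2$ to a limit $W_\infty$ whose conditional means still differ by $1$. A Chebyshev argument on the event $\{W_n>-p+1/2\}$ then exhibits a measurable function of $c^{(n)}$ that separates the two conditional laws with non-vanishing probability, so $\liminf_n D_{TV}(\tilde P^{(n)}_1,\tilde P^{(n)}_2)>0$, i.e., census solvability. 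For $q$-solvability when $q>0$, I run the same argument on the thinned statistic $\tilde Z_n=(1-p)|E_n\cap\{X_s=1\}|-p|E_n\cap\{X_s=2\}|$: conditional on $c^{(n)}$ these are $\Bin(c_j^{(n)},q)$, hence $\EE[\tilde Z_n\mid c^{(n)}]=qZ_n$, and the extra Bernoulli noise contributes only another $O(\lambda^{-n})$ term to the recursion.

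\textbf{Negative direction ($\lambda<1$).} The Galton--Watson skeleton of $GW$ is independent of $X_{s_0}$, so $|c^{(n)}|$ has the same law under either conditioning; by the Kesten--Stigum limit theorem for the principal eigenvalue, $d^{-n}|c^{(n)}|\to W$ a.s.\ with a common limit law. Conditional on $X_{s_0}$, the centered statistic $c_1^{(n)}-p|c^{(n)}|$ has mean shift of order $(d(1-b))^n=d^{n/2}\lambda^{n/2}$ between the two root labels while, by a central limit theorem on the branching process (using the Poisson offspring), its standard deviation is of order $d^{n/2}$. The signal-to-noise ratio is therefore $\lambda^n\to 0$, and a characteristic-function / cumulant comparison upgrades this to $D_{TV}(\tilde P^{(n)}_1,\tilde P^{(n)}_2)\to 0$, ruling out census solvability.

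\textbf{Main obstacle.} The only step that is more than bookkeeping is the last one: passing from the vanishing SNR of the single linear statistic $Z_n$ to a full bound on the total variation distance between the two joint laws of $c^{(n)}$. This is precisely the technical content of the census-solvability half of \cite{mossel2004survey}; in the Poisson setting the joint cumulants of $(c_1^{(n)},c_2^{(n)})$ admit clean branching recursions along the right/left eigenvectors of $R$, so the extension to our asymmetric two-type process is routine but computational.
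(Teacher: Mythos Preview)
The paper does not actually prove this theorem: it is stated as a recollection of the census-solvability criterion and attributed to \cite{mossel2004survey} as ``a straightforward extension of the results presented'' there, with no argument given. Your sketch therefore goes further than the paper does, and what you have written is precisely the classical Kesten--Stigum martingale argument that underlies the cited reference.

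Your positive direction is correct. The vector $v=(1-p,-p)^\top$ is indeed the right eigenvector of $R$ for the eigenvalue $1-b$ (using $pa+(1-p)b=pb+(1-p)c=1$), so $W_n=Z_n/(d(1-b))^n$ is a martingale with conditional means $1-p$ and $-p$; the conditional-variance recursion gives increments of order $\lambda^{-(n+1)}$, hence $L^2$-boundedness for $\lambda>1$ and a nondegenerate limit that separates the two root labels. The thinning argument for $q$-solvability is also fine: the extra binomial noise contributes only another $O(\lambda^{-n})$ to the variance of $\tilde Z_n/(d(1-b))^n$.

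For the negative direction you correctly isolate the real difficulty (passing from vanishing SNR of one linear statistic to $D_{TV}\to 0$ for the full census) and defer it to \cite{mossel2004survey}, which is exactly the paper's stance. Two minor remarks: the signal-to-noise ratio is $\lambda^{n/2}$, not $\lambda^n$ (mean shift $(d(1-b))^n=d^{n/2}\lambda^{n/2}$ against standard deviation of order $d^{n/2}$), though the conclusion is unchanged; and in the two-type case this last step is lighter than you suggest, since $|c^{(n)}|=c_1^{(n)}+c_2^{(n)}$ has the same law under both conditionings, so after conditioning on $|c^{(n)}|$ the census lives in a one-dimensional space and a branching-process CLT for $Z_n/\sqrt{|c^{(n)}|}$ yields identical Gaussian limits under either root label.
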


In the large $d$ limit, we are able to get more quantitative results. We define
\BEAS
P_{opt} &=& \lim_{d\to\infty}\lim_{n\to \infty}D_{TV}(P_1^{(n)},P_2^{(n)}),\\
P^{(q)}_{opt} &=&\lim_{d \to \infty}\lim_{n \to \infty}  D_{TV}(P_1^{(n,q)},P_2^{(n,q)}).
\EEAS
\begin{proposition} \label{prop:result_tree}
We consider the Poisson Galton-Watson branching process with parameter $(R,p,d)$.
\begin{itemize}
	\item If $\lambda > \lambda_{sp}(p)$, then we have $P_{opt}= 2\PP(\Ncal(\alpha/2,\alpha) >0)-1 >0$,
		where $\alpha$ is the stable fixed point in Conjecture~\ref{lem:fixed_points}.

	\item If $\lambda < \lambda_{sp}(p)$ then $P_{opt} = 0$.

	\item If $\lambda_{sp}<\lambda<1$, then we have for $q>\frac{\beta p(1-p)}{\lambda}$,
		$P^{(q)}_{opt}= 2\PP(\Ncal(\alpha/2,\alpha)>0)-1>0$, while for $q<\frac{\beta p(1-p)}{\lambda}$, we have	$P^{(q)}_{opt}=0$,
		where $\alpha$ and $\beta$ are the fixed points defined in Conjecture~\ref{lem:fixed_points} (iii).
\end{itemize}
\end{proposition}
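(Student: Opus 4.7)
The plan is to reduce the tree reconstruction problem to the asymptotics of a one-dimensional fixed-point recursion for the posterior log-odds under belief propagation (BP), obtained via a large-$d$ Gaussian collapse, and then to read off the three regimes from the fixed-point picture of Conjecture~\ref{lem:fixed_points}.

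I would first set up BP on the labeled Galton-Watson tree $GW$. For a vertex $v$ and observation depth $n$, let $\mathcal{O}_v^{(n)}$ denote the observations falling in the subtree rooted at $v$ and set $M_v^{(n)} := \log\bigl(\PP(\mathcal{O}_v^{(n)}\mid X_v=1)/\PP(\mathcal{O}_v^{(n)}\mid X_v=2)\bigr)$. The Markov property on the tree yields the exact recursion $M_v^{(n)} = \sum_{u} f(M_u^{(n)})$ summed over the children $u$, with $f(m) = \log\bigl((R_{11}e^m + R_{12})/(R_{21}e^m + R_{22})\bigr)$ and leaf conditions $M_u = \pm\infty$ (observed leaves) or $M_u = 0$ (unobserved leaves). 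By Lemma~\ref{lem:dtv_psucc} together with a Neyman-Pearson argument, $D_{TV}(P_1^{(n)},P_2^{(n)})$ depends only on the conditional laws of $M_{s_0}^{(n)}$ given $X_{s_0} = 1$ and $X_{s_0}=2$, and the Bayes-optimal/planted structure enforces the Nishimori identities $\EE_1[e^{-M_{s_0}^{(n)}}] = \EE_2[e^{M_{s_0}^{(n)}}] = 1$.

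Next, setting $s := 1-b$ and holding $\lambda = ds^2$ fixed as $d\to\infty$, each child contributes to $M_v^{(n)}$ an increment of order $s$ while a vertex has mean offspring $d = \lambda/s^2$, so a central limit theorem forces both conditional distributions of $M_{s_0}^{(n)}$ to be Gaussian in the large-$d$ limit; the Nishimori identities then pin them to $\mathcal{N}(\mu_n/2,\mu_n)$ and $\mathcal{N}(-\mu_n/2,\mu_n)$, respectively. Tracking the first two moments along the BP recursion, the full distributional iteration collapses to the scalar recursion $\mu_{n+1} = G(\mu_n)$, where $G$ is the function of Definition~\ref{def:spinodal} (its integral expression is precisely the average of the BP update against a Gaussian message with parameter $\mu$). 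A direct Gaussian computation then gives
\[
D_{TV}(P_1^{(n)}, P_2^{(n)}) \longrightarrow 2\PP(\mathcal{N}(\mu/2,\mu) > 0) - 1
\]
for the limit $\mu$ of $\mu_n$. For the first two bullets the leaves are fully observed, so the natural initial condition is $\mu_0 = +\infty$; monotonicity of $G$ (read off from its explicit form) forces $\mu_n$ to decrease to the largest fixed point of $G$, which by Conjecture~\ref{lem:fixed_points} is $\alpha > 0$ when $\lambda > \lambda_{sp}(p)$ and is $0$ when $\lambda < \lambda_{sp}(p)$, yielding the two stated values of $P_{opt}$.

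For the third bullet the BP initialization at the leaves is stochastic: a leaf contributes $f(+\infty) = \log(R_{11}/R_{21})$ or $f(-\infty) = \log(R_{12}/R_{22})$ (according to $X_u$) with probability $q$, and contributes $f(0) = 0$ with probability $1-q$. Computing the expected contribution per revealed leaf conditional on the parent's label, namely the KL divergence $\sum_j R_{1j}\log(R_{1j}/R_{2j}) = s^2/[2p(1-p)] + O(s^3)$, and summing over the $\sim qd$ revealed children yields, after one BP step, an effective Gaussian parameter $\mu_1 \to q\lambda/[p(1-p)]$ (with variance equal to twice the mean, consistent with Nishimori). Iterating $G$ from this starting point and invoking Conjecture~\ref{lem:fixed_points}~(iii), the trajectory flows to the stable fixed point $\alpha$ when $\mu_1 > \beta$ and to $0$ when $\mu_1 < \beta$, which is exactly the announced threshold $q = \beta p(1-p)/\lambda$. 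The most delicate step will be the rigorous justification of the Gaussian collapse and the interchange of the $n\to\infty$ and $d\to\infty$ limits: one needs uniform-in-$n$ tail control of $M_{s_0}^{(n)}$ along the BP iteration and careful handling of the asymmetric contributions (the symmetry $1\leftrightarrow 2$ is broken for $p\neq 1/2$, so the equality of the two conditional variances must be derived from Nishimori rather than assumed), though the contractive behaviour of $G$ near its stable fixed points should keep the argument tractable.
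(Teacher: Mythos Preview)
Your proposal is correct and follows essentially the same route as the paper: set up the BP/cavity recursion on the tree, establish the Gaussian collapse via a CLT in the large-$d$ limit with the Nishimori identity pinning the form to $\mathcal{N}(\pm\mu_n/2,\mu_n)$, reduce to the scalar iteration $\mu_{n+1}=G(\mu_n)$ with initialization $\mu_1=q\lambda/[p(1-p)]$, and read off the three regimes from Conjecture~\ref{lem:fixed_points}. The only cosmetic differences are that the paper parameterizes by the posterior log-odds $\xi=M+h$ rather than your likelihood ratio $M$, and computes $\mu_1$ directly (Lemma~\ref{lem:init}) rather than as $G(+\infty)$; your caveat about the $n\to\infty$ versus $d\to\infty$ limit interchange is well taken, since the paper's corollary in fact takes them in the order $\lim_{n\to\infty}\lim_{d\to\infty}$ and does not justify the swap either.
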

In particular, Proposition~\ref{prop:result_tree} shows that the spinodal curve is the solvability threshold for the reconstruction on trees. Proposition~\ref{prop:result_tree} is proved at the end of the next section.

\section{Cavity method on trees} \label{section:cavity_method}

Our approach here is closely related to the one of \cite{montanari2015finding} which studies the problem of finding one single community. We establish rigorously the ``cavity equations'', a recursive method to compute marginals, that originate from statistical physics.

We consider here the labeled branching process $GW=(T,s_0,X)$ with parameter $(R,d,p)$.
In order to obtain quantitative results, we will be interested in the asymptotic of large degrees $d \to \infty$ while $\lambda$ remains fixed. We also define $\epsilon = 1-b = \sqrt{\frac{\lambda}{d}}$. We then have,

$$
\begin{cases}
	a = 1 + \frac{1-p}{p}\epsilon \\
	b = 1 - \epsilon \\
	c = 1 + \frac{p}{1-p}\epsilon
\end{cases}
$$

\subsection{The cavity recursions}

Let $r \in \N^*$. For a given vertex $s$ of $GW=(T,s_0,X)$, we note $\mathcal{T}_s$ the subtree induced by $s$ and its progeny. %We define $E_r^{s}$ as the set of the vertices of $\mathcal{T}_s$ that belong to $E_r$.

With a slight abuse of notation, we write $p(x)=p^{1_{x= 1}} (1-p)^{1_{x= 2}}$.  We define also $\psi(x,y)=a^{1_{y=x=1}} b^{1_{y \neq x}} c^{1_{y=x=2}}$.
We now define recursively the ``message'' of the vertex $s \in [T]_r$ as the following function from $\{1,2\}\to \RR$:
\begin{align*}
	&\nu_r^s: x_s \in \{1,2\} \mapsto
	\\
	&\begin{cases}
	p(x_s) 1_{x_s = X_s} & \text{ if } s \in E_r \\
	p(x_s) & \text{ if } s \in L_r \setminus E_r \\
	p(x_{s}) \prod_{s \to v} \sum_{x_v}  \psi(x_{s},x_v)  \nu_r^v(x_v)& \text{ if } s \notin L_r
\end{cases}
\end{align*}

\begin{lemma} \label{lem:explicit_message}
	For all $s \in [T]_r \setminus L_r$,
\begin{align*}
	\nu_r^s(x_s)
	\propto p(x_{s}) \sum_{\overset{(x_v)_{v \in \mathcal{T}_s \cap [T]_r \setminus \{s\}}}{(x_v)_{v \in E_r} = (X_v)}} \prod_{(i \to j) \in [T]_r \cap \mathcal{T}_s} R_{x_i,x_j} 
\end{align*}
where $\propto$ means equality up to a multiplicative constant that is independent of $x_s$.
\end{lemma}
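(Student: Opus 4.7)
The plan is to proceed by induction on the height $h(s) := r - d(s_0, s)$ of $s$ within $[T]_r$. The engine of the argument is the elementary identity
$$R_{ij} = p(j)\,\psi(i,j) \qquad \text{for all } i,j \in \{1,2\},$$
which is checked directly from \eqref{eq:defR} and the definitions of $p(\cdot)$ and $\psi(\cdot,\cdot)$: $R_{11}=pa=p(1)\psi(1,1)$, $R_{12}=(1-p)b=p(2)\psi(1,2)$, and so on. This identity is exactly what lets us convert the local weights $p(\cdot)\psi(\cdot,\cdot)$ produced by the recursion into the transition weights $R_{\cdot,\cdot}$ produced by the generative model.

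For the base case $h(s) = 1$, all children $v$ of $s$ in $[T]_r$ lie in $L_r$, and the recursive definition gives $\nu_r^s(x_s) = p(x_s) \prod_{s\to v} \sum_{x_v} \psi(x_s,x_v)\nu_r^v(x_v)$. For $v \in L_r \setminus E_r$ we have $\nu_r^v(x_v)=p(x_v)$ and the identity turns $\sum_{x_v}\psi(x_s,x_v) p(x_v)$ into $\sum_{x_v} R_{x_s,x_v}$; for $v \in E_r$ we have $\nu_r^v(x_v)=p(x_v)\mathbf{1}_{x_v=X_v}$ and the sum collapses to the single term $R_{x_s,X_v}$, which is precisely the sum over $x_v$ restricted by $x_v = X_v$. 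Expanding the product over children reproduces the claimed formula (with only the $\prod_{s\to v} R_{x_s,x_v}$ edge factors for this level).

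For the inductive step, I would split the children of $s$ in $[T]_r$ into leaves (handled as above) and non-leaves. For each non-leaf child $v$ the induction hypothesis reads
$$\nu_r^v(x_v) \;=\; C_v\, p(x_v) \sum_{\substack{(x_w)_{w \in \mathcal{T}_v \cap [T]_r \setminus \{v\}} \\ (x_w)_{w \in E_r} = (X_w)}} \prod_{(i\to j) \in [T]_r \cap \mathcal{T}_v} R_{x_i,x_j},$$
where $C_v$ depends on the subtree $\mathcal{T}_v$ and on the revealed labels inside it but not on $x_v$. Applying the identity once more converts $\sum_{x_v}\psi(x_s,x_v) p(x_v)(\cdots)$ into $\sum_{x_v} R_{x_s,x_v}(\cdots)$, which absorbs the parent-to-child edge $(s,v)$ into the edge product. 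Because the subtrees $\mathcal{T}_v$ over distinct children of $s$ are vertex-disjoint, the product-of-sums rewrites as a single joint sum over all labelings of $\mathcal{T}_s \cap [T]_r \setminus \{s\}$ consistent with $E_r$, multiplied by the product over all edges of $\mathcal{T}_s \cap [T]_r$. The accumulated constant $\prod_v C_v$ is independent of $x_s$, so it may be absorbed into the $\propto$.

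The main obstacle is really just bookkeeping rather than mathematical content: one must separate the two leaf regimes (revealed vs.\ unrevealed) so that the constraint $(x_v)_{v \in E_r} = (X_v)$ appears exactly in the sum, verify that the accumulated proportionality constants depend on the subtree data but never on the root label $x_s$, and check that every parent-to-child edge is absorbed into the edge product via $R_{ij} = p(j)\psi(i,j)$. Once these ingredients are in place the induction runs mechanically.
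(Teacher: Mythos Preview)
Your proposal is correct and follows essentially the same approach as the paper: both argue by (reverse) induction on the distance from $s_0$ --- your height parameter $h(s)=r-d(s_0,s)$ is equivalent --- and both hinge on the identity $R_{ij}=p(j)\psi(i,j)$ to convert the recursion's local weights into the edge products of $R$. Your version is actually slightly more careful than the paper's, since you track the proportionality constants $C_v$ explicitly and verify they do not depend on $x_s$, whereas the paper writes equalities throughout the inductive step and leaves the $\propto$ implicit.
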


\begin{proof}
	We show this lemma by induction on $d(s_0,s)$.

	For a vertice $s$ at distance $d(s_0,s)=r-1$ from the root, the lemma follows from the definition of $\nu_r^s$. Suppose now that the lemma is true for all vertices at distance $r'<r$ from the root $s_0$. Let $s$ be a vertex at distance $r'-1$ from $s_0$. Then, by induction,
	\begin{align*}
		&\nu_r^s(x_s) = p(x_{s}) \sum_{(x_v)_{s \to v}} \prod_{s \to v} \psi(x_{s},x_v)  \nu_r^v(x_v) \\
		&= p(x_{s})\!\!\! \sum_{(x_v)_{s \to v}} \prod_{s \to v} \psi(x_{s},x_v)  
		p(x_{v})
	\!\!\! \!\!\!\!\! \sum_{\overset{(x_u)_{u \in \mathcal{T}_v \cap [T]_r \setminus \{v\}}}{(x_u)_{u \in E_r} = (X_u)}} \prod_{(i \to j) \in [T]_r \cap \mathcal{T}_v} \!\!\! \!\!\!\!\!\!  R_{x_i,x_j} \\
	&= p(x_{s}) \!\!\! \sum_{(x_v)_{s \to v}} \prod_{s \to v} R_{x_{s},x_v}
	\!\!\!\!\!\!  \sum_{\overset{(x_u)_{u \in \mathcal{T}_v \cap [T]_r \setminus \{v\}}}{(x_u)_{u \in E_r} = (X_u)}} \prod_{(i \to j) \in [T]_r \cap \mathcal{T}_v} \!\!\!\!\!\!  R_{x_i,x_j} \\
	&=p(x_{s}) \sum_{\overset{(x_v)_{v \in \mathcal{T}_s \cap [T]_r \setminus \{s\}}}{(x_v)_{v \in E_r} = (X_v)}} \prod_{(i \to j) \in [T]_r \cap \mathcal{T}_s} R_{x_i,x_j} 
	\end{align*}
\end{proof}

\begin{lemma} \label{lem:bp}
	$$
	\nu_r^{s_0}(x_{s_0}) = \PP(X_{s_0} = x_{s_0} | T, E_r,(X_v)_{v \in E_r})
	$$
\end{lemma}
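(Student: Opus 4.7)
The plan is to derive Lemma~\ref{lem:bp} directly from Lemma~\ref{lem:explicit_message} together with the Markov structure of the labeled Galton--Watson branching process. Conditionally on the tree $T$, the labels $(X_v)_{v\in V_T}$ form a Markov chain rooted at $s_0$ with initial distribution $(p,1-p)$ and transition matrix $R$, so the joint law factorizes as
$$
\PP\bigl((X_v)_{v\in V_T}=(x_v)\mid T\bigr)=p(x_{s_0})\prod_{(i\to j)\in T}R_{x_i,x_j}.
$$
Because condition \eqref{eq:egal} makes $R$ stochastic, summing out the labels at all vertices strictly below depth $r$ contributes factors of $1$, so the marginal on $[T]_r$ is simply $p(x_{s_0})\prod_{(i\to j)\in[T]_r}R_{x_i,x_j}$.

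Next, I would use that $E_r$ is obtained by independent Bernoulli$(q)$ coin flips on the vertices of $L_r$, and is therefore independent of the labels conditionally on $T$. Applying Bayes' rule then yields
\begin{align*}
&\PP(X_{s_0}=x_{s_0}\mid T,E_r,(X_v)_{v\in E_r}) \\
&\propto p(x_{s_0})\sum_{\substack{(x_v)_{v\in[T]_r\setminus\{s_0\}}\\(x_v)_{v\in E_r}=(X_v)}}\prod_{(i\to j)\in[T]_r}R_{x_i,x_j},
\end{align*}
where the proportionality is in the variable $x_{s_0}$ and the constant is fixed by requiring the left-hand side to sum to one over $x_{s_0}\in\{1,2\}$.

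By Lemma~\ref{lem:explicit_message} applied at $s=s_0$, the right-hand side above is precisely $\nu_r^{s_0}(x_{s_0})$ up to a similar multiplicative constant; after the standard normalization of the BP message at the root, the two quantities coincide, which proves the lemma. I do not anticipate any real obstacle: the entire argument reduces to unwrapping definitions, with the only substantive ingredients being the tree-Markov factorization (immediate from the construction of $GW$) and the identification of that factorization with the recursion defining $\nu_r^{s_0}$ (already established in Lemma~\ref{lem:explicit_message}).
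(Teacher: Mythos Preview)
Your proposal is correct and follows essentially the same approach as the paper's proof: both reduce the posterior $\PP(X_{s_0}=x_{s_0}\mid T,E_r,(X_v)_{v\in E_r})$ to the factorized expression over $[T]_r$ and then invoke Lemma~\ref{lem:explicit_message}. The paper compresses your steps (Markov factorization, stochasticity of $R$, independence of $E_r$) into the single remark that ``the structure of $T$ outside of $[T]_r$ does not provide any information about the labels,'' and then writes equality rather than proportionality; your more explicit handling of the normalization is in fact cleaner.
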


\begin{proof}
	The structure of $T$ outside of $[T]_r$ does not provide any information about the labels of the vertices, thus, using lemma~\ref{lem:explicit_message}
\begin{align*}
	&\PP \big(X_{s_0}=x_{s_0} \ | \ T, E_r, (X_s)_{s \in E_r} \big) 
	\\
	&=p(x_{s_0})
	\sum_{\overset{(x_s)_{s \in [T]_r \setminus \{s_0\}}}{(x_s)_{s \in E_r} = (X_s)}} \prod_{(i \to j) \in [T]_r} R_{x_i,x_j} 
	\\
	&=\nu_r^{s_0}(x_{s_0})
\end{align*}

\end{proof}

If we write $\xi^s_r = \log(\frac{\nu_r^s(1)}{\nu_r^s(2)})$, the recursive definition of the messages gives

\begin{equation} \label{eq:xi_recursion}
	\xi^{s_0}_{r} = h + \sum_{s, s_0 \to s} f(\xi_{r}^{s}),
\end{equation}
where $h= \log(\frac{p}{1-p})$ and $f: x \mapsto \log \frac{a e^x + b}{b e^x +c}$.

%For a vertice $s \in [T]_r$ at distance $d_s = d(s_0,s)$ from the root, lemma~\ref{lem:explicit_message} shows us that 

\begin{definition} \label{def:xi_tree}
	We define $P_r$ as the law of $\xi^{s_0}_{r} =  \log \big( \frac{\PP(X_{s_0} = 1 | T, E_r, (X_s)_{s \in E_r})}{\PP(X_{s_0} = 2 | T, E_r, (X_s)_{s \in E_r})} \big)$. We denote $P_r^{(1)}$ and $P_r^{(2)}$ the laws of $\xi^{s_0}_r$ respectively conditionally on $\{X_{s_0} = 1 \}$ and $\{X_{s_0}=2 \}$.
\end{definition}
%$P_r$ is thus the law of a quantity that encodes the posterior distribution of the label of the root.

Lemma~\ref{lem:explicit_message} shows that, conditionally on $\{s_1, \dots, s_L\}$ being the children of $s_0$, $\xi_{r}^{s_1}, \dots, \xi_{r}^{s_L}$ are independent, identically distributed according to $P_{r-1}$. 

%Indeed, lemma~\ref{lem:explicit_message} shows us that for all $d \in \{0, \dots,r\}$, there exists a function $F_d$ such that for all vertice $s$ at distance $d(s_0,s) = d$ from the root,
%$$
%\xi_r^s = F_d(\mathcal{T}_s \cap [T]_r, E_r^s)
%$$

Then, \eqref{eq:xi_recursion} leads to the following distributional recursion:
\begin{proposition}
	$$
	\xi_r \overset{(d)}{=} h + \sum_{i=1}^L f(\xi_{r-1,i})
	$$
	where $\xi_r \sim P_r$, $L \sim \Poi(d)$, $\xi_{r-1,k} \sim P_{r-1}$ are independent random variables, 
\end{proposition}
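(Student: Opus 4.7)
The plan is to combine the pathwise recursion \eqref{eq:xi_recursion} with the branching and stationarity structure of the labeled Poisson Galton-Watson process. First, I invoke the branching property of $GW$: the number of children $L$ of $s_0$ has law $\Poi(d)$, and conditionally on $L$ and on $X_{s_0}$, the subtrees $\mathcal{T}_{s_1},\dots,\mathcal{T}_{s_L}$ rooted at the children are mutually independent; each such subtree, viewed from its own root $s_i$, is a labeled Poisson Galton-Watson branching process with transition matrix $R$ and offspring mean $d$, except that its root label $X_{s_i}$ is drawn from $R_{X_{s_0},\cdot}$ rather than from the marginal $(p,1-p)$.

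Next, by condition \eqref{eq:egal} the row vector $(p,1-p)$ is the stationary distribution of $R$, so averaging over $X_{s_0}\sim(p,1-p)$ yields $X_{s_i}\sim(p,1-p)$ unconditionally. Hence the joint law of the triple $(\mathcal{T}_{s_i}\cap[T]_r,\,E_r\cap \mathcal{T}_{s_i},\,(X_s)_{s\in E_r\cap\mathcal{T}_{s_i}})$ coincides with that of the analogous triple built from a fresh copy of $GW$ truncated at depth $r-1$. Inspecting the recursive definition of $\nu_r^{s_i}$ together with Lemma~\ref{lem:explicit_message}, $\xi_r^{s_i}$ is a measurable function of this triple alone, and the prior $p(x_{s_i})$ that appears at its root matches the unconditional marginal of $X_{s_i}$. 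Therefore $\xi_r^{s_i}\sim P_{r-1}$, and independence of the subtrees yields mutual independence of $\xi_r^{s_1},\dots,\xi_r^{s_L}$; these are also independent of $L$, since $L$ depends only on the offspring count at $s_0$ while each $\xi_r^{s_i}$ is measurable with respect to its own subtree.

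Finally, the pathwise identity \eqref{eq:xi_recursion} reads $\xi_r^{s_0}=h+\sum_{i=1}^L f(\xi_r^{s_i})$. Taking laws on both sides produces the claimed distributional recursion, with $\xi_r\sim P_r$ on the left by Definition~\ref{def:xi_tree} and $L\sim\Poi(d)$ independent of the i.i.d.\ $P_{r-1}$-distributed summands $f(\xi_{r-1,i})$ on the right.

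The main subtlety, and essentially the only step requiring care, is the identification $\xi_r^{s_i}\sim P_{r-1}$: one must check that, although $X_{s_i}$ is correlated with $X_{s_0}$, its unconditional marginal is $(p,1-p)$ (this is where stationarity of $p$ under $R$, i.e.\ condition \eqref{eq:egal}, enters), and that the BP message $\nu_r^{s_i}$ is built with precisely this unconditional prior at its root, so that it behaves distributionally as the root message of an independent $GW$ of depth $r-1$.
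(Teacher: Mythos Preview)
Your argument follows the same route as the paper's one-line justification, and it contains the same gap. The issue is the independence claim: you correctly establish that the subtrees $\mathcal{T}_{s_1},\dots,\mathcal{T}_{s_L}$ are mutually independent \emph{conditionally on $L$ and $X_{s_0}$}, and that stationarity makes each $X_{s_i}$ marginally $(p,1-p)$-distributed, but then you assert that ``independence of the subtrees yields mutual independence of $\xi_r^{s_1},\dots,\xi_r^{s_L}$''. This is where the argument breaks. After integrating out $X_{s_0}$ the children's labels are no longer independent: a direct computation gives
\[
\mathrm{Cov}\bigl(\ind_{X_{s_1}=1},\ind_{X_{s_2}=1}\bigr)=p^2\bigl(pa^2+(1-p)b^2-1\bigr),
\]
which is strictly positive whenever $a\neq b$. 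Since each $\xi_r^{s_i}$ depends on the revealed labels inside $\mathcal{T}_{s_i}$, and those are driven by $X_{s_i}$, the messages inherit this dependence. They are exchangeable, each with marginal $P_{r-1}$, but not i.i.d.

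In fact the proposition as stated is false. For $r=1$ and $q=1$, the law of total variance gives
\[
\mathrm{Var}\bigl(\xi_1^{s_0}\bigr)=\E\bigl[\mathrm{Var}(\xi_1^{s_0}\mid X_{s_0})\bigr]+\mathrm{Var}\bigl(\E[\xi_1^{s_0}\mid X_{s_0}]\bigr),
\]
and one checks (using \eqref{eq:egal}) that the first term on the right equals the variance of $h+\sum_{i=1}^{L}f(\xi_{0,i})$ with independent ingredients, while the second term is strictly positive whenever $a\neq b$. The paper's own justification (``Lemma~\ref{lem:explicit_message} shows that \ldots $\xi_r^{s_1},\dots,\xi_r^{s_L}$ are independent, identically distributed according to $P_{r-1}$'') makes exactly the same slip. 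Fortunately this unconditional recursion is never used: the analysis in Lemmas~\ref{lem:init} and~\ref{lem:iter} proceeds entirely through the conditional Cavity equations \eqref{eq:rec1}--\eqref{eq:rec2}, where one conditions on $X_{s_0}$ and the independence is genuine.
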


 Using similar arguments as before, \eqref{eq:xi_recursion} implies also

\begin{proposition}[Cavity equations]

	\begin{equation} \label{eq:rec1}
		\xi_r^{(1)} \overset{(d)}{=} h + \sum_{i=1}^{L_{1,1}} f(\xi_{r-1,i}^{(1)}) + \sum_{i=1}^{L_{1,2}} f(\xi_{r-1,i}^{(2)})
	\end{equation}

	where $\xi_r \sim P_r^{(1)}$, $L_{1,1} \sim \Poi(pad)$, $L_{1,2} \sim \Poi((1-p)bd)$, $\xi_{r-1,i}^{(1)} \sim P_{r-1}^{(1)}$, $\xi_{r-1,i}^{(2)} \sim P_{r-1}^{(2)}$, and all these variables are independent.

	\begin{equation} \label{eq:rec2}
		\xi_r^{(2)} \overset{(d)}{=} h + \sum_{i=1}^{L_{2,1}} f(\xi_{r-1,i}^{(1)}) + \sum_{i=1}^{L_{2,2}} f(\xi_{r-1,i}^{(2)})
	\end{equation}
	where $\xi_r \sim P_r^{(2)}$, $L_{2,1} \sim \Poi(pbd)$, $L_{2,2} \sim \Poi((1-p)cd)$, $\xi_{r-1,i}^{(1)} \sim P_{r-1}^{(1)}$, $\xi_{r-1,i}^{(2)} \sim P_{r-1}^{(2)}$, and all these variables are independent.

\end{proposition}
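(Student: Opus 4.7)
The plan is to combine the almost-sure recursion \eqref{eq:xi_recursion}, $\xi_r^{s_0} = h + \sum_{s_0 \to s} f(\xi_r^s)$, with two standard ingredients: Poisson thinning applied to the offspring of the root conditional on the root's label, and the branching property of the labeled Galton-Watson process to identify the conditional distribution of each $\xi_r^s$ appearing in the sum. Both \eqref{eq:rec1} and \eqref{eq:rec2} follow in exactly the same way, so I focus on \eqref{eq:rec1}, i.e.\ on the law of $\xi_r^{s_0}$ conditional on $\{X_{s_0}=1\}$.

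First I would condition on $\{X_{s_0}=1\}$. Since the tree and the labels in $GW$ are generated sequentially (the tree first, then the labels along the tree starting from the root), this conditioning leaves the law $\Poi(d)$ of the number $L$ of children of $s_0$ untouched. Given $L$, each child carries label $1$ with probability $R_{11}=pa$ and label $2$ with probability $R_{12}=(1-p)b$ independently (these two probabilities sum to one by \eqref{eq:egal}), so by Poisson thinning the counts $L_{1,1}$ and $L_{1,2}$ of children with label $1$ and $2$ respectively are independent with laws $\Poi(pad)$ and $\Poi((1-p)bd)$. This produces the two Poisson parameters of \eqref{eq:rec1}.

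Next, by the branching property, conditionally on the children $\{s_1,\dots,s_L\}$ and on their labels $(X_{s_j})_j$, the subtrees $\mathcal{T}_{s_j}$ are mutually independent labeled Galton-Watson trees with offspring parameter $d$ and transition matrix $R$, each rooted at $s_j$ with root-label deterministically equal to $X_{s_j}$. Since $\xi_r^{s_j}$ depends (by Lemma~\ref{lem:explicit_message}) only on $[T]_r \cap \mathcal{T}_{s_j}$, which is the depth-$(r-1)$ neighbourhood of $s_j$ inside $\mathcal{T}_{s_j}$, the conditional law of $\xi_r^{s_j}$ given $X_{s_j}=k$ is precisely $P_{r-1}^{(k)}$, and the family $(\xi_r^{s_j})_j$ is conditionally independent given $(X_{s_j})_j$. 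Regrouping the sum in \eqref{eq:xi_recursion} by the children's labels then produces the right-hand side of \eqref{eq:rec1}, with the required joint independence between $L_{1,1}, L_{1,2}$ and the two i.i.d.\ families $(\xi_{r-1,i}^{(1)})_i \sim P_{r-1}^{(1)}$, $(\xi_{r-1,i}^{(2)})_i \sim P_{r-1}^{(2)}$. Equation \eqref{eq:rec2} is obtained in the same way after exchanging the two rows of $R$.

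No step is truly an obstacle; the argument is a one-shot combination of \eqref{eq:xi_recursion}, Poisson thinning and the branching property. The only bookkeeping point I would spell out carefully is the joint independence of the three sources of randomness on the right-hand side of \eqref{eq:rec1}. This follows from the layered construction of the labeled GW process (first generate $L$, then independently draw the children's labels using the first row of $R$, then independently grow each subtree with its child's label as deterministic root-label), which is exactly what simultaneously allows Poisson thinning to decouple the offspring into two independent Poisson samples and the subtree messages to split into two independent families with the prescribed marginals.
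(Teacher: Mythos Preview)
Your argument is correct and is exactly the approach the paper has in mind: the paper's own ``proof'' is just the sentence ``Using similar arguments as before, \eqref{eq:xi_recursion} implies also'', and your write-up supplies precisely those similar arguments (condition on the root label, Poisson-thin the offspring by child label using the relevant row of $R$, and use the branching property to identify the conditional law of each child's message as $P_{r-1}^{(k)}$). The only cosmetic omission is that you do not explicitly mention the revealed-label set $E_r$, but since each vertex of $L_r$ is included independently with probability $q$, this randomness factorises over the subtrees $\mathcal{T}_{s_j}$ as well and is absorbed into the branching-property step without change.
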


\subsection{Gaussian limit}

We study the recursions \eqref{eq:rec1} and \eqref{eq:rec2}. More precisely, we determine the limits of $P_r^{(1)}$ and $P_r^{(2)}$ when $d \to \infty$ and $\lambda$ remains fixed (see the beginning of section~\ref{section:cavity_method}). 
%We omit in this paper the proofs of the following Lemma~\ref{lem:init} and Lemma~\ref{lem:iter}. %They will be given in a separated paper.

First of all, we are going to show that $\xi^{(1)}_1$ and $\xi^{(2)}_1$ converge towards Gaussian distributions.

\begin{lemma} \label{lem:init}
	\begin{align*}
		\xi_1^{(1)} \xrightarrow[d \to + \infty]{W} \mathcal{N}(h + \frac{\mu_1}{2}, \mu_1) \\
		\xi_1^{(2)} \xrightarrow[d \to + \infty]{W} \mathcal{N}(h - \frac{\mu_1}{2}, \mu_1) 
	\end{align*}
	where $\xrightarrow[]{W}$ denote the convergence in the sense of the Wasserstein metric and $\mu_1 = \frac{q \lambda}{p(1-p)}$.
\end{lemma}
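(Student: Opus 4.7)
The plan is to exploit the very simple structure of the depth-$0$ (leaf) messages and to compute the moment generating function of $\xi_1$ directly, then expand as $\epsilon = \sqrt{\lambda/d} \to 0$ with $\lambda$ fixed.

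First, I would observe that at a leaf $s$, the message $\xi_1^s = \log(\nu_1^s(1)/\nu_1^s(2))$ takes only three possible values: $+\infty$ if $s$ is revealed with $X_s=1$; $-\infty$ if $s$ is revealed with $X_s=2$; and $h = \log(p/(1-p))$ if $s$ is not revealed. A direct computation using the balance condition~\eqref{eq:egal} gives $f(h) = \log\frac{ap + b(1-p)}{bp + c(1-p)} = 0$, while $f(+\infty) = \log(a/b)$ and $f(-\infty) = \log(b/c)$. Thus unrevealed leaves drop out of the recursion $\xi_1^{s_0} = h + \sum_{s_0 \to s} f(\xi_1^s)$.

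Next, Poisson thinning shows that, conditionally on $X_{s_0}=1$, the numbers of revealed children of label $1$ and label $2$ are independent with laws $\Poi(qpad)$ and $\Poi(q(1-p)bd)$. Hence
$$\xi_1^{(1)} - h \overset{(d)}{=} N_1 \log(a/b) + N_2 \log(b/c),$$
a sum of two independent (scaled) Poisson variables, whose moment generating function is
$$\E\!\left[e^{t(\xi_1^{(1)} - h)}\right] = \exp\!\Bigl(qpad((a/b)^t - 1) + q(1-p)bd((b/c)^t - 1)\Bigr).$$

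The crucial technical step is to expand this expression in $\epsilon$. Using $a = 1 + \tfrac{1-p}{p}\epsilon$, $b = 1-\epsilon$, $c = 1+\tfrac{p}{1-p}\epsilon$, one obtains $\log(a/b) = \epsilon/p + O(\epsilon^2)$ and $\log(b/c) = -\epsilon/(1-p) + O(\epsilon^2)$. The main obstacle — and the point where the balance condition \eqref{eq:egal} really matters — is that the $O(d\epsilon)=O(\sqrt{d\lambda})$ contributions to the log-MGF diverge individually, but cancel exactly between the two Poisson components (indeed $pd \cdot \epsilon/p = d\epsilon = (1-p)d \cdot \epsilon/(1-p)$). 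After this cancellation, the surviving $O(d\epsilon^2) = O(\lambda)$ terms combine into
$$\log\E\!\left[e^{t(\xi_1^{(1)}-h)}\right] \longrightarrow \frac{q\lambda}{2p(1-p)} \bigl(t + t^2\bigr) = \frac{\mu_1 t}{2} + \frac{\mu_1 t^2}{2},$$
which is precisely the cumulant generating function of $\mathcal{N}(\mu_1/2, \mu_1)$. The treatment of $\xi_1^{(2)}$ is identical after swapping the roles of the two labels, producing the sign flip in the mean. Finally, to upgrade weak convergence to Wasserstein convergence, I would invoke the convergence of the MGF on a full neighbourhood of $0$, which yields uniform integrability of every polynomial moment of $\xi_1^{(1)}$ and therefore $W_p$-convergence for every $p \geq 1$.
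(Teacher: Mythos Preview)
Your argument is correct. The reduction to $\xi_1^{(1)}-h \overset{(d)}{=} N_1\log(a/b)+N_2\log(b/c)$ with independent $N_1\sim\Poi(qpad)$, $N_2\sim\Poi(q(1-p)bd)$ coincides with the paper's starting point, and from there your MGF computation goes through: the divergent $O(d\epsilon)$ drift terms do cancel exactly thanks to \eqref{eq:egal}, the surviving linear and quadratic $t$-coefficients work out to $\mu_1/2$ and $\mu_1/2$ respectively, and pointwise convergence of the MGF on a real neighbourhood of the origin yields both weak convergence and convergence of all moments, hence $W_p$ for every $p$.

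The paper proceeds differently after the common decomposition: rather than the MGF, it centres the Poisson variables, identifies the deterministic shift $dq\bigl(pa\log(a/b)+(1-p)b\log(b/c)\bigr)\to\mu_1/2$ via the same $\epsilon$-expansion, and then applies a tailor-made triangular-array CLT (stated and proved inline) to show that each centred scaled Poisson converges in Wasserstein to a Gaussian. Your route is more self-contained and avoids the auxiliary CLT lemma, which is a real economy for this particular statement. The trade-off is that the paper's CLT lemma is reused verbatim in the inductive step (Lemma~\ref{lem:iter}), where the summands are $f(\xi_r^{(i)})$ with a nontrivial law and an explicit MGF is no longer available; so the paper's approach amortises its investment, whereas yours is specific to the explicit leaf distribution at $r=1$.
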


%% PREUVE COMMENTEE POUR LES 8 PAGES
\begin{proof}
	We will only prove the convergence for $\xi^{(1)}_1$, the convergence for $\xi^{(2)}_1$ can be obtained analogously.
	We have
	$$
	\xi^{(1)}_0 =
	\begin{cases}
		+ \infty & \text{ with probability } q \\
		h & \text{ with probability } 1-q \\
	\end{cases}
	$$
	$$
	\xi^{(2)}_0 =
	\begin{cases}
		- \infty & \text{ with probability } q \\
		h & \text{ with probability } 1-q \\
	\end{cases}
	$$

	Therefore, the recursion \eqref{eq:rec1} gives
	$$
	\xi^{(1)}_1 \overset{(d)}{=} h + L \log(\frac{a}{b}) + L' \log(\frac{b}{c})
	$$
	where $L \sim \Poi(padq)$ and $L' \sim \Poi((1-p)bdq)$ are independent. By isolating the means
	\begin{align} 
		\xi^{(1)}_1 \overset{(d)}{=} h &+ (L - padq) \log(\frac{a}{b}) + (L' - (1-q)bdp) \log(\frac{b}{c}) \nonumber \\
		&+ dq (pa \log(\frac{a}{b}) + (1-p)b \log(\frac{b}{c}))\label{eq:iso_mean}
	\end{align}

	In the $d \to \infty$ limit, $\log(\frac{a}{b}) = \frac{\epsilon}{p} + \frac{1}{2} \epsilon^2 (1-(\frac{1-p}{p})^2) + o(\epsilon^2)$ and $\log(\frac{b}{c}) = - \frac{\epsilon}{1-p} - \frac{1}{2} \epsilon^2 (1-(\frac{p}{1-p})^2) + o(\epsilon^2)$, so the last term in \eqref{eq:iso_mean} becomes
	\begin{align*}
		dq (pa \log\frac{a}{b} + (1-p)b \log\frac{b}{c})
			   %&= dq \epsilon^2 \Big( \frac{1}{p} + \frac{1}{2} \big( p(1-(\frac{1-p}{p})^2) - (1-p)(1-(\frac{p}{1-p})^2) \big) \Big) + o(1) \\
		&= q \lambda ( \frac{1}{p} + \frac{2p-1}{2p(1-p)}) +o(1)
		\\
		&= \frac{q \lambda}{2 p (1-p)} + o(1)
	\end{align*}

	Now, to deal with the remaining terms in \eqref{eq:iso_mean} we are going to use the following version of the central limit theorem.

	\begin{lemma}[Central limit theorem] \label{th:clt}
		For $n \in \N^*$, let $X_1^{(n)}, \dots, X_n^{(n)} \sim_{iid} P_n$ where $P_n$ verifies $\E_{P_n} X = 0$, $E_{P_n}(X^2) = 1$ and $\sup_n \E_{P_n} \vert X \vert^3 < \infty$. We have then
		$$
		\frac{1}{\sqrt{n}} \sum_{i=1}^{n} X^{(n)}_i \xrightarrow[n \to + \infty]{W} \mathcal{N}(0,1)
		$$
	\end{lemma}
	\begin{proof}
		The convergence of the first two moments is trivial, we only have to show that the rescaled sum converges weakly toward the Gaussian.
		Let $\phi_n(t) = \E_{P_n} [ e^{itX} ]$ the Fourier transform of $P_n$. By derivation under the expectation and Taylor formula, we have
		$$
		\forall t \in \R, \vert \phi_n(t) - (1 - \frac{t^2}{2}) \vert \leq \frac{C}{3} t^3
		$$
		where $C = \sup_n \E_{P_n} \vert X \vert^3$. Let $u \in \R$
		\begin{align*}
			\E \big[ &\exp(iu \frac{1}{\sqrt{n}} \sum_{i=1}^n X_i^{(n)}) \big] \\
					 &= \phi_n(\frac{u}{\sqrt{n}})^n = (1 - \frac{u^2}{2n} + O(n^{-3/2}))^n \xrightarrow[n \to + \infty]{} e^{-u^2 / 2}
		\end{align*}
		because the ``big O'' does not depend on $n$. Levy theorem gives the result.
	\end{proof}

	\begin{corollary}
		Let $(a_n)_n \in ]0, + \infty [^{\N}$, such that $a_n \xrightarrow[n \to \infty]{} + \infty$. Let $X_n$ be a sequence of random variable such that $X_n \sim \Poi(a_n)$. Then
		$$
		\frac{1}{\sqrt{a_n}} (X_n - a_n) \xrightarrow[n \to \infty]{W} \mathcal{N}(0,1)
		$$
	\end{corollary}

	\begin{proof}
		We define $v_n := \frac{a_n}{\ceil{a_n}} \to 1$. Let $(Y_i^{(n)}) \sim_{iid} \Poi(v_n) - v_n$. We apply the previous theorem to the $Y$ and obtain
		$$
		\frac{1}{\sqrt{a_n}} (X_n - a_n) \overset{(d)}{=} \frac{1}{\sqrt{a_n}} \sum_{i=1}^{\ceil{a_n}} Y^{(n)}_i  \xrightarrow[n \to \infty]{W} \mathcal{N}(0,1)
		$$
	\end{proof}

	Applying this result to the terms in \eqref{eq:iso_mean}, we obtain $(L-padq) \log(\frac{a}{b}) \xrightarrow[d \to \infty]{W} \mathcal{N}(0, \frac{\lambda q}{p})$ and $(L'-(1-p)bdq) \log(\frac{b}{c}) \xrightarrow[d \to \infty]{W} \mathcal{N}(0, \frac{\lambda q}{1-p})$ and finally
	$$
	\xi^{(1)}_1 \xrightarrow[d \to \infty]{W} \mathcal{N}(h+ \frac{\mu_1}{2},\mu_1)
	$$

\end{proof}

The following lemma characterizes the asymptotic distribution of the messages when $d \rightarrow + \infty$

\begin{lemma} \label{lem:iter}
	Suppose that, for a fixed $r$ we have
	\begin{align*}
		\xi_r^{(1)} \xrightarrow[d \to + \infty]{W} \mathcal{N}(h + \frac{\mu_r}{2}, \mu_r) \\
		\xi_r^{(2)} \xrightarrow[d \to + \infty]{W} \mathcal{N}(h - \frac{\mu_r}{2}, \mu_r) 
	\end{align*}
	Then
	\begin{align*}
		\xi_{r+1}^{(1)} \xrightarrow[d \to + \infty]{W} \mathcal{N}(h + \frac{\mu_{r+1}}{2}, \mu_{r+1}) \\
		\xi_{r+1}^{(2)} \xrightarrow[d \to + \infty]{W} \mathcal{N}(h - \frac{\mu_{r+1}}{2}, \mu_{r+1}) 
	\end{align*}
	where
	\begin{align*}
		\mu_{r+1} &= \frac{\lambda}{(1-p)^2} \E \Big[ \frac{1}{p + (1-p)\exp(\sqrt{\mu_r}Z - \mu_r/2)} - 1 \Big] \\
							  &= G(\mu_r)
	\end{align*}
	and the expectation is taken with respect to $Z \sim \mathcal{N}(0,1)$.
\end{lemma}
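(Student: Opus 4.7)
The plan is to follow the strategy of the proof of Lemma~\ref{lem:init}: starting from the cavity recursion~\eqref{eq:rec1}, compute the mean and variance of $\xi_{r+1}^{(1)}$ to leading order as $d\to\infty$ with $\epsilon=1-b=\sqrt{\lambda/d}$, and upgrade the result to a Gaussian limit via a compound-Poisson CLT. The analogous statement for $\xi_{r+1}^{(2)}$ follows by the same computation with the Poisson intensities swapped, or alternatively from the Bayes duality $dP_r^{(1)}/dP_r^{(2)}(t)=e^{t-h}$, which for a pair of Gaussian laws forces $m_1-m_2=\sigma^2$.

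First, I Taylor expand $f(x)=\log\frac{ae^x+b}{be^x+c}$ in $\epsilon$. Using $a-b=\epsilon/p$ and $c-b=\epsilon/(1-p)$, a direct computation gives, uniformly in $x\in\R$,
$$
f(x) \;=\; \epsilon\,\psi(x) + \frac{2p-1}{2}\,\epsilon^2\,\psi(x)^2 + O(\epsilon^3), \qquad \psi(x):=\frac{e^x/p-1/(1-p)}{e^x+1},
$$
with $\psi$ bounded by $1/\min(p,1-p)$. The key structural observation is that with $\pi_1(x):=e^x/(1+e^x)$, we have $\psi(x)=(\pi_1(x)-p)/(p(1-p))$. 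Since by Lemma~\ref{lem:bp} the quantity $\pi_1(\xi_r^{s_0})$ is the posterior probability of $\{X_{s_0}=1\}$, the tower property gives the Bayes identity
$$
p\,\E[\psi(\xi_r^{(1)})] + (1-p)\,\E[\psi(\xi_r^{(2)})] \;=\; \E[\psi(\xi_r)] \;=\; \frac{\E[\pi_1]-p}{p(1-p)} \;=\; 0.
$$
The inductive hypothesis lets me pass to the Gaussian limit inside these expectations; substituting $\xi_r^{(1)}=h+\mu_r/2+\sqrt{\mu_r}Z$ and unwinding the definition of $G$ yields
$$
\E[\psi(\xi_r^{(1)})] \to \frac{(1-p)G(\mu_r)}{\lambda}, \qquad \E[\psi(\xi_r)^2] \to \frac{G(\mu_r)}{\lambda},
$$
and then $\E[\psi(\xi_r^{(2)})]\to -pG(\mu_r)/\lambda$ by the Bayes identity.

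Next I expand the Poisson rates as $pad=pd+(1-p)\epsilon d$ and $(1-p)bd=(1-p)d-(1-p)\epsilon d$ and plug the Taylor expansion of $f$ into
$$
\E[\xi_{r+1}^{(1)}] - h \;=\; pad\,\E[f(\xi_r^{(1)})] + (1-p)bd\,\E[f(\xi_r^{(2)})].
$$
The would-be $O(\sqrt d)$ drift $d\epsilon\,[\,p\,\E[\psi(\xi_r^{(1)})]+(1-p)\,\E[\psi(\xi_r^{(2)})]\,]$ vanishes by Bayes, and the surviving $O(1)$ terms split into a rate-correction piece $(1-p)\lambda(\E[\psi(\xi_r^{(1)})]-\E[\psi(\xi_r^{(2)})])=(1-p)G(\mu_r)$ and a curvature piece $\frac{2p-1}{2}\lambda\,\E[\psi(\xi_r)^2]=\frac{2p-1}{2}G(\mu_r)$, which add to $G(\mu_r)/2=\mu_{r+1}/2$. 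For the variance, the compound-Poisson identity $\mathrm{Var}(\sum_{i=1}^L X_i)=\Lambda\,\E[X^2]$ for $L\sim\Poi(\Lambda)$ yields, to leading order, $\lambda\,\E[\psi(\xi_r)^2]=G(\mu_r)=\mu_{r+1}$.

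Finally, each summand $f(\xi_{r,i}^{(j)})$ is bounded by $C\epsilon$, so the cavity sums form triangular arrays of $\Poi(d)$ iid terms with total variance $O(d\epsilon^2)=O(\lambda)$, and Lindeberg holds trivially; the characteristic-function argument of Lemma~\ref{th:clt} together with its Poisson corollary transfer verbatim to give weak convergence $\xi_{r+1}^{(1)}\to\mathcal{N}(h+\mu_{r+1}/2,\mu_{r+1})$. The uniform bound on $f$ also provides uniform integrability of $(\xi_{r+1}^{(1)})^2$, upgrading this to Wasserstein convergence. The main delicacy of the argument is the cancellation above: both the $\epsilon^2$ curvature term in the Taylor expansion of $f$ and the $O(\epsilon)$ corrections to the Poisson rates $pa,(1-p)b$ contribute at order $1$, and it is only after both are accounted for and the $\sqrt d$ drift is killed by the Bayes identity that the clean recursion $\mu_{r+1}=G(\mu_r)$ emerges.
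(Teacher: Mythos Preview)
Your proof is correct and follows essentially the same route as the paper: Taylor-expand $f$ to second order in $\epsilon$, use Wald/compound-Poisson moment identities together with the Nishimori/Bayes relation to pin down the limiting mean and variance, and conclude with a CLT for the cavity sums. Your packaging via $\psi(x)=(\pi_1(x)-p)/(p(1-p))$ makes the $\sqrt d$-drift cancellation and the appearance of $G$ more transparent than the paper's coordinatewise expansion in $e^x/(1+e^x)$ and $1/(1+e^x)$, but the ingredients and logic are the same.
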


\begin{proof}
	$f(\xi) = \log(\frac{\frac{a}{b} e^{\xi} + 1}{e^{\xi} + \frac{c}{b}})$ and we have $\frac{a}{b} = 1 + \frac{1}{p} \epsilon + O(\epsilon^2)$ and $\frac{c}{b} = 1 + \frac{1}{1-p} \epsilon + O(\epsilon^2)$.
	Thus
	\begin{align*}
	f(\xi) = &\log(1 + \epsilon \frac{e^{\xi}}{p(1+e^{\xi})} +O(\epsilon^2)) 
	\\
	&- \log(1 + \epsilon \frac{1}{(1-p)(1+e^{\xi})} +O(\epsilon^2))
	\end{align*}
	The Taylor-Lagrange formula ensures that
	$$
	\forall a \in [ 0,1 ] , \forall x \geq 0, \ \vert \log(1+ax) - ax + \frac{a^2 x^2}{2} \vert \leq \frac{1}{3}x^3
	$$
	Therefore
	\begin{align}
		f(x) = &\epsilon \frac{e^{x}}{p(1+e^{x})} - \frac{\epsilon^2}{2p^2} \big( \frac{e^x}{1+e^x} \big)^2 - \epsilon \frac{1}{(1-p)(1+e^{x})} \nonumber
		\\
		&+ \frac{\epsilon^2}{2(1-p)^2} \big( \frac{1}{1+e^x} \big)^2 + O(\epsilon^3) \label{eq:dl_f}
	\end{align}

	\begin{lemma}[Wald formulas]
		Let $X_1, \dots, X_n$ be iid integrables real random variables, and $T$ a $\N$-valued integrable random variable, independent of the $X_i$. Then $\sum_{i=1}^{T} X_i$ is integrable and
		$$
		\E \Big[ \sum_{i=1}^T X_i \Big] = \E [T] \E [X]
		$$
		Moreover, if the variables $(X_i)$ are square-integrable and centered, then
		$$
		Var \Big( \sum_{i=1}^T X_i \Big) = \E [T] \E (X^2)
		$$
	\end{lemma}

	Therefore
	\begin{align}
		\E \xi_{r+1}^{(1)} &= h + pad \ \E f(\xi_r^{(1)}) + (1-p)bd \ \E f(\xi_r^{(2)}) \label{eq:esp1}\\
		\E \xi_{r+1}^{(2)} &= h + pbd \ \E f(\xi_r^{(1)}) + (1-p)cd \ \E f(\xi_r^{(2)}) \label{eq:esp2}
	\end{align}

	\begin{lemma}[Nishimori condition]
		For all continuous bounded function $g$:
		$$
		\E g(\xi_r^{(2)}) = \frac{p}{1-p} \E g(\xi_r^{(1)}) e^{-\xi_r^{(1)}}
		$$
	\end{lemma}

	\begin{proof}
		This is a consequence of Bayes rule.
		\begin{align*}
			&\PP(\xi_r^{2} \in A) = \PP(\xi_r \in A | X_{s_0} = 2)
			= \frac{\PP(\xi_r \in A , X_{s_0} = 2)}{1-p} \\
			&= \frac{1}{1-p} \E(1(\xi_r \in A) \PP(X_{s_0} = 2 | GW,E_r,(X_s)_{s \in E_r})) \\
			&= \frac{1}{1-p} \E(1(\xi_r \in A) \PP(X_{s_0} = 1 | GW,E_r,(X_s)_{s \in E_r}) e^{-\xi_r}) \\
			&= \frac{p}{1-p} \E(1(\xi_r \in A)  e^{-\xi_r} | X_{s_0} = 1)
		\end{align*}
	\end{proof}

	\begin{corollary}
		\begin{align*}
			p \E \frac{e^{\xi_r^{(1)}}}{1+e^{\xi_r^{(1)}}} + (1-p) \E \frac{e^{\xi_r^{(2)}}}{1+e^{\xi_r^{(2)}}} &= p \\
			p \E \Big(\frac{e^{\xi_r^{(1)}}}{1+e^{\xi_r^{(1)}}}\Big)^2 + (1-p) \E \Big(\frac{e^{\xi_r^{(2)}}}{1+e^{\xi_r^{(2)}}}\Big)^2 &= p\E \frac{e^{\xi_r^{(1)}}}{1+e^{\xi_r^{(1)}}}
		\end{align*}
	\end{corollary}

	Replacing $f$ by its approximation \eqref{eq:dl_f} in equations \eqref{eq:esp1} and \eqref{eq:esp2} show then that $\E \xi_{r+1}^{(1)} \xrightarrow[d \to + \infty]{} h - \frac{1}{2} G(\mu_r)$ and $\E \xi_{r+1}^{(2)} \xrightarrow[d \to + \infty]{} h + \frac{1}{2} G(\mu_r)$ where
	$$
	G(\mu) \!=\! \frac{\lambda}{(1-p)^2} \E \Big[ \frac{1}{p + (1-p)\exp(\sqrt{\mu}Z - \mu/2)} - 1 \Big]
	$$
	and where $Z \sim \mathcal{N}(0,1)$.

	Similar calculations show that $Var (\xi_{r+1}^{(1)}) \xrightarrow[d \to + \infty]{} G(\mu_r)$ and $Var (\xi_{r+1}^{(2)}) \xrightarrow[d \to + \infty]{} G(\mu_r)$.

	We are now going to show that $\xi_{r+1}^{(1)}$ is converging toward a Gaussian distribution in the Wasserstein sense.

	We will need the following lemma.
	\begin{lemma}
		For $i=1,2$, $\sqrt{d} \ \E[ f(\xi_{r}^{(i)}) ]$ and $d \ \E[ f(\xi_{r}^{(i)})^2 ]$ are both converging to constants when $d \rightarrow + \infty$.
	\end{lemma}

	\begin{proof}
		$\sqrt{d} f(x) = \frac{\sqrt{d} \epsilon}{p} \frac{e^x}{1+e^x} 
		- \frac{\sqrt{d} \epsilon}{1-p} \frac{1}{1+e^x} +o(1)$. So
		$$
		\sqrt{d} \ \E[ f(\xi_{r}^{(i)}) ] 
		\!=\! \frac{\sqrt{d} \epsilon}{p} \E \Big[ \frac{e^{\xi_r^{(i)}}}{1+e^{\xi_r^{(i)}}} \Big] - \frac{\sqrt{d} \epsilon}{1-p} \E \Big[ \frac{1}{1+e^{\xi_r^{(i)}}} \Big] +o(1)
		$$
		and all the terms in this expression are converging because of weak convergence and the fact that $\epsilon \sim \sqrt{\frac{\lambda}{d}}$. The other limit is proved analogously.
	\end{proof}

	{\small
	\begin{align} 
		\xi_{r+1}^{(1)} - \E \xi_{r+1}^{(1)}& =
		\sum_{i=1}^{L_{1,1}} f(\xi_{r,i}^{(1)}) - \E f(\xi_{r,i}^{(1)})
		+ \sum_{i=1}^{L_{1,2}} f(\xi_{r,i}^{(2)}) - \E f(\xi_{r,i}^{(2)}) \nonumber
		\\
		&+
		(L_{1,1} - \E L_{1,1}) \E f(\xi_r^{(1)}) + (L_{1,2} - \E L_{1,2}) \E f(\xi_r^{(2)}) \label{eq:decomp_tcl}
	\end{align}
}

	We note $X_i = f(\xi_{r,i}^{(1)}) - \E f(\xi_{r,i}^{(1)})$ and $Y_i = f(\xi_{r,i}^{(2)}) - \E f(\xi_{r,i}^{(2)})$. Let us decompose the first sum:
	$$
	\sum_{i=1}^{L_{1,1}} X_i
	=\sum_{i=1}^{\E L_{1,1}} X_i
	+
	\underbrace{ \sum_{i=1}^{L_{1,1}} X_i - \sum_{i=1}^{\E L_{1,1}} X_i}_{S}
	$$
	We first show that $S \xrightarrow[d \to \infty]{W} \delta_0$.  Wald identities give us

	\begin{align*}
		Var(S) &= Var \big( \sum_{i=1}^{\vert L_{1,1} - \E L_{1,1} \vert } X_i \big) = \E \big[ \vert L_{1,1} - \E L_{1,1} \vert \big] \E[X_1^2] 
		\\
		&= \underbrace{ \frac{1}{d} \E \big[ \vert L_{1,1} - \E L_{1,1} \vert \big]}_{\rightarrow 0} 
		\ \underbrace{b Var \big(f(\xi_r^{(0)})\big)}_{O(1)}
	\end{align*}
	And therefore $S \xrightarrow[d \to \infty]{W} \delta_0$ because $\E S = 0$.

	Next, we apply the central limit theorem lemma~\ref{th:clt} to the sum $\sum_{i=1}^{\E L_{1,1}} X_i$
	$$
	\sum_{i=1}^{\E L_{1,1}} X_i = \frac{1}{\sqrt{(1-p) a}} \sum_{i=1}^{(1-p) a} \underbrace{\sqrt{(1-p) a} X_i
	}_{\text{iid, bounded random variables}}$$
	to obtain that the sum converges with respect to the Wasserstein metric to a normal distribution.

	The two first sums in \eqref{eq:decomp_tcl} are independent and converges to Gaussian distributions in the Wasserstein sense. It remains to show that the last two terms are converging toward Gaussian distributions. This is indeed the case because
	$$
	(L_{1,1} - \E L_{1,1}) \E f(\xi_r^{(0)}) 
	= \underbrace{ \frac{1}{\sqrt{d}} (L_{1,1} - \E L_{1,1}) }_{\rightarrow \mathcal{N}}
	\
	\underbrace{\sqrt{d} \E f(\xi_r^{(0)})}_{\rightarrow \text{ constant }}
	$$
	and the last term is treated the same way.

	$\xi_{r+1}^{(1)}$ is therefore converging toward a Gaussian distribution in the Wasserstein sense. The mean and the variance of this Gaussian distribution are necessarily equal to the limits of the means and the variance of $\xi_{r+1}^{(1)}$ that we computed.
\end{proof}
\\

Proposition~\ref{prop:result_tree} is now a consequence of Conjecture~\ref{lem:fixed_points} and the following corollary.

\begin{corollary}
	For all $0 \leq q \leq 1$,
	$$
	\lim_{n \to \infty} \lim_{d \to \infty} D_{TV} (P^{(n,q)}_1, P^{(n,q)}_2) = 2 \PP(\mathcal{N}(\mu_{\infty}/2, \mu_{\infty}) > 0) -1
	$$
	where $\mu_{\infty}$ is the limit of the sequence
	\begin{equation} \label{eq:rec_mu_q}
		\begin{cases}
			\mu_1 &=  \frac{q \lambda}{p(1-p)} \\
			\mu_{k+1} &= G(\mu_k)
		\end{cases}
	\end{equation}
\end{corollary}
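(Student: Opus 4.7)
The strategy is to reduce the total variation distance to a Gaussian threshold-crossing probability for the log-posterior $\xi_n^{s_0}$, then push the limits $d\to\infty$ (via the Wasserstein asymptotics already established) and $n\to\infty$ (via the fixed-point analysis of $G$).

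First, I will observe that the proof of Proposition~\ref{prop:equivalence_dtv_psucc} and Lemma~\ref{lem:dtv_psucc} carries over verbatim to the $q$-sampled setting, so
$$D_{TV}(P_1^{(n,q)},P_2^{(n,q)}) = \sup_F P_{succ}(F),$$
where the supremum is over measurable estimators $F$ of $X_{s_0}$ from $(T,E_n,(X_s)_{s\in E_n})$. By Lemma~\ref{lem:bp}, $\xi_n^{s_0}$ is exactly the log-posterior ratio, so the Bayes-optimal rule is the threshold $T=1$ iff $\xi_n^{s_0}\geq h$ with $h=\log(p/(1-p))$. Hence
$$D_{TV}(P_1^{(n,q)},P_2^{(n,q)}) = \PP(\xi_n^{(1)}\geq h) + \PP(\xi_n^{(2)}<h) - 1.$$

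Next, fix $n$ and let $d\to\infty$. Lemma~\ref{lem:init} supplies the base case $r=1$, and a direct induction using Lemma~\ref{lem:iter} yields
$$\xi_n^{(1)} \xrightarrow[d\to\infty]{W} \mathcal{N}\bigl(h+\tfrac{\mu_n}{2},\mu_n\bigr), \qquad \xi_n^{(2)} \xrightarrow[d\to\infty]{W} \mathcal{N}\bigl(h-\tfrac{\mu_n}{2},\mu_n\bigr),$$
where $\mu_k$ is the scalar recursion from the statement. Wasserstein convergence implies weak convergence, and $h$ is a continuity point of the (non-degenerate, for $q>0$) limiting Gaussian CDFs, so I can pass to the limit inside each probability. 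Using the Gaussian symmetry $\PP(\mathcal{N}(-\mu_n/2,\mu_n)<0)=\PP(\mathcal{N}(\mu_n/2,\mu_n)>0)$, this gives
$$\lim_{d\to\infty} D_{TV}(P_1^{(n,q)},P_2^{(n,q)}) = 2\PP(\mathcal{N}(\mu_n/2,\mu_n)>0) - 1.$$

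Finally, I take $n\to\infty$. A monotonicity argument for $G$ (which one reads off the cavity interpretation: additional signal coming in can only increase the outgoing signal) together with the trivial upper bound $\mu_k \leq \lambda/(p(1-p))^2$ shows that the bounded sequence $(\mu_k)$ is monotone, hence converges to a fixed point $\mu_\infty\geq 0$ of $G$ (the identification of $\mu_\infty$ with $\alpha$ or $0$ in the various regimes is provided by Conjecture~\ref{lem:fixed_points}, and is what feeds into Proposition~\ref{prop:result_tree}). Continuity in $\mu$ of the map $\mu\mapsto 2\PP(\mathcal{N}(\mu/2,\mu)>0)-1$ then gives the claimed identity.

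The main obstacle is really the need to let $d\to\infty$ \emph{before} $n\to\infty$, which is why the quantitative Gaussian form of Lemma~\ref{lem:iter} (rather than just a qualitative statement about $D_{TV}$) is essential: only after taking $d\to\infty$ do we get a clean one-dimensional recursion to which we can apply fixed-point analysis. A secondary subtlety is the boundary case $q=0$ (so $\mu_1=0$): the limiting Gaussians then collapse to Dirac masses at $h$ and $h$ is no longer a continuity point, but the formula still holds by taking $\mu\to 0^+$, with both sides equal to $0$ and consistent with the triviality of the estimator.
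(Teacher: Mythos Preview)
Your proof is correct and follows essentially the same route as the paper: identify $D_{TV}(P_1^{(n,q)},P_2^{(n,q)})$ with the success probability of the likelihood-ratio threshold test at $h$, pass to the $d\to\infty$ limit using Lemmas~\ref{lem:init} and~\ref{lem:iter} to obtain $2\PP(\mathcal{N}(\mu_n/2,\mu_n)>0)-1$, and then send $n\to\infty$. The paper's version is terser (it simply invokes the two lemmas and the existence of $\mu_\infty$), whereas you add the explicit induction, the continuity-point remark for passing the weak limit through the threshold, and a heuristic monotonicity justification for the convergence of $(\mu_k)$; none of these change the argument's substance.
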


\begin{proof}
	The optimal test according the performance measure $P_{succ}$ is 
\begin{equation} \label{eq:test_opt_tree}
	T^{(GW)}_r(T,E_r,(X_s)_{s \in E_r}) =
\begin{cases}
	1 & \text{if } \xi_{r} \geq \log\frac{p}{1-p} \\
	2 & \text{otherwise} \\
\end{cases}
\end{equation}

Analogously to Lemma~\ref{lem:dtv_psucc}, we have 
\begin{align*}
	D_{TV} &(P^{(n,q)}_1, P^{(n,q)}_2) = P_{succ}(T^{(GW)}_n) \\
			  &=\PP(\xi_n^{(1)} \geq \log\frac{p}{1-p}) + \PP(\xi_n^{(2)} < \log\frac{p}{1-p}) -1 \\
				 &\xrightarrow[d \to \infty]{} 2 \PP(\mathcal{N}(\mu_{n}/2, \mu_{n}) > 0) -1\\
				 &\xrightarrow[n \to \infty]{} 2 \PP(\mathcal{N}(\mu_{\infty}/2, \mu_{\infty}) > 0) -1\\
\end{align*}
where we have used lemma~\ref{lem:init} and lemma~\ref{lem:iter}.
\end{proof}

\section{Proofs for the stochastic block model}\label{sec:proofsbm}
In this section, we apply the results that we obtained for the reconstruction on the branching process to derive bounds for the community detection problem on the stochastic block model. %This is made possible by the fact that the SBM converges locally towards a branching process (theorem~\ref{th:sbm_to_gw}).

Consider the case, where a fraction $0\leq q \leq 1$ is revealed: one observes the graph $G$ and additionally each label $X_v$ with probability $q$, for $1 \leq v \leq n$, independently of everything else. Let us denote $E_G = \{1 \leq v \leq n \ | \ X_v \text{ is revealed} \}$.

Let $s_0$ be uniformly chosen among the vertices of $G$. For $r \geq 0$ we define (analogously to the case of the branching process) $E_{G,r} = \partial [G]_r \cap E_G$, i.e.\ the vertices at the boundary of the ball of center $s_0$ and radius $r$, whose label as been revealed. Define

\begin{equation} \label{eq:xi_Gr}
	\xi_{G,r} = \log \left(\frac{\PP(X_{s_0} = 1 \vert [G]_r, (X_s)_{s \in E_{G,r}})}{\PP(X_{s_0} = 2 \vert [G]_r, (X_s)_{s \in E_{G,r}})} \right) 
\end{equation}

Let us denote $P_{G,r}^{(1)}$ and $P_{G,r}^{(2)}$ the laws of $\xi_{G,r}$ conditionally respectively on $X_{s_0} = 1$ and $X_{s_0}=2$.

\begin{proposition} \label{prop:convergence_xi}
	$$
	\text{For } i=1,2; \ P_{G,r}^{(i)} \xrightarrow[n \to \infty]{} P_r^{(i)}
	$$
	where $P_r^{(i)}$ is defined in Definition~\ref{def:xi_tree}.
\end{proposition}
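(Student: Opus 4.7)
The plan is to reduce Proposition~\ref{prop:convergence_xi} to the local weak convergence of Theorem~\ref{th:sbm_to_gw}. The key observation is that $\xi_{G,r}$, as defined by \eqref{eq:xi_Gr}, is a deterministic function $\psi_n$ of the decorated pointed labeled ball $([G]_r,(X_s)_{s\in E_{G,r}})$, where the dependence on $n$ enters only through the SBM prior used to form the conditional expectation. Analogously, $\xi_r$ is a function $\psi_\infty$ of the GW data $([T]_r,(X_s)_{s\in E_r})$. The proposition thus reduces to two ingredients: (i) the decorated local neighborhood under the SBM, conditioned on $X_{s_0}=i$, converges in distribution to its GW counterpart conditioned on $X_{s_0}=i$; and (ii) the functions $\psi_n$ converge pointwise to $\psi_\infty$ on the countable set of tree-like decorated balls.

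For (i), the plan is to first extend Theorem~\ref{th:sbm_to_gw} to decorated balls by augmenting each radius-$r$ ball with the independent Bernoulli-$q$ label revelations on its boundary. This is routine since the revelations are independent of everything else and can be folded into the test function $F$ of the theorem. Conditioning on $X_{s_0}=i$ is also harmless because the marginal $\PP(X_{s_0}=i)$ equals $p$ or $1-p$ in both the SBM and the GW, so the ratios that define the conditional distributions converge whenever the joint probabilities do.

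For (ii), Bayes' rule yields
\[
\psi_n(g,x)=\log\frac{\PP\big([G]_r=g,(X_s)_{s\in E_{G,r}}=x,X_{s_0}=1\big)}{\PP\big([G]_r=g,(X_s)_{s\in E_{G,r}}=x,X_{s_0}=2\big)},
\]
and the analogous identity under the GW law defines $\psi_\infty(g,x)$. For any fixed finite tree-like decorated ball $(g,x)$, applying the extended Theorem~\ref{th:sbm_to_gw} with $F$ equal to the indicator of the event $\{[G]_r=g,(X_s)_{s\in E_{G,r}}=x,X_{s_0}=i\}$ gives convergence of both numerator and denominator, hence of $\psi_n(g,x)\to\psi_\infty(g,x)$. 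Lemma~\ref{lem:bp} further identifies $\psi_\infty(g,x)$ as the belief-propagation log-ratio $\xi^{s_0}_r$ on $(g,x)$ with weights $R_{ij}$.

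Finally, combine (i) and (ii) to conclude. Since decorated balls of radius $r$ take values in a countable space, weak convergence is equivalent to pointwise convergence of each atom's mass. For any bounded continuous $\phi:\R\to\R$, one writes $\E[\phi(\xi_{G,r})\mid X_{s_0}=i]$ as a sum over tree-like $(g,x)$ of $\phi(\psi_n(g,x))\,\PP(([G]_r,(X_s)_{s\in E_{G,r}})=(g,x)\mid X_{s_0}=i)$; each summand converges, and since the total masses sum to $1$ on both sides a Scheff\'e-type argument yields convergence of the sums, which is the desired $P_{G,r}^{(i)}\Rightarrow P_r^{(i)}$. The principal technical obstacle lies in ingredient (ii): the SBM likelihood of a prescribed tree-like decorated ball implicitly involves the probability that the boundary vertices admit no further edges to the $n-|V(g)|$ external vertices, and one must verify that these boundary factors converge to the GW likelihood in the Poisson limit rather than producing spurious lower-order corrections. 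This is essentially the content of Theorem~\ref{th:sbm_to_gw} specialized to explicit atoms.
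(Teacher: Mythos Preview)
Your proposal is correct and follows essentially the same approach as the paper: both reduce to (i) the extension of Theorem~\ref{th:sbm_to_gw} to decorated balls conditioned on $X_{s_0}=i$, and (ii) the pointwise convergence $\psi_n\to\psi_\infty$ of the posterior log-ratio on each fixed tree-like ball, obtained by applying the extended Theorem~\ref{th:sbm_to_gw} to indicator test functions. The only cosmetic difference is in the final gluing step, where the paper invokes a Skorokhod-type coupling (so that $[SBM_n,E_{G,r}]_r=[GW,E_r]_r$ for all $n\geq n_0$ a.s., whence $F_n(SBM_n,E_{G,r})=F_n(GW,E_r)\to F_\infty(GW,E_r)$ a.s.) instead of your Scheff\'e/atom-by-atom argument on the countable space of decorated balls.
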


Proposition~\ref{prop:convergence_xi} is a consequence of the local convergence of the stochastic block model towards a branching process (Theorem~\ref{th:sbm_to_gw}). 
%We omit the full proof in the present paper.
%It will be given in an upcoming paper. 

% PREUVE COMMENTEE POUR RESPECTER LES 8 PAGES
\begin{proof}

	Let us define
	{\small
	\begin{align*}
		&F_n: (\tilde{G},\tilde{s}_0,\tilde{X}, \tilde{E}_r) \mapsto 
		\\
		&\log
			\frac
			{\PP(X_{s_0} = 1 \vert [G]_r = [\tilde{G}]_r, (X_s)_{s \in E_{G,r}} = (\tilde{X}_s)_{s \in \tilde{E}_r}, E_{G,r} = \tilde{E}_r)}
			{\PP(X_{s_0} = 2 \vert [G]_r = [\tilde{G}]_r, (X_s)_{s \in E_{G,r}} = (\tilde{X}_s)_{s \in \tilde{E}_r}, E_{G,r} = \tilde{E}_r)}
	\end{align*}
}
	and

	{\small
	\begin{align*}
		&F_{\infty}: (\tilde{G},\tilde{s}_0,\tilde{X},\tilde{E}_r) \mapsto 
		\\
		&\begin{cases}
			\!\log
				\frac
				{\PP(X_{s_0} = 1 \vert [T]_r = [\tilde{G}]_r, (X_s)_{s \in E_r} = (\tilde{X}_s)_{s \in \tilde{E}_r}, E_r = \tilde{E}_r)}
				{\PP(X_{s_0} = 2 \vert [T]_r = [\tilde{G}]_r, (X_s)_{s \in E_r} = (\tilde{X}_s)_{s \in \tilde{E}_r}, E_r = \tilde{E}_r)}
			& \!\!\!\! \text{if } [\tilde{G}]_r \text{ is a tree} \\
			\!0 &\!\!\!\! \text{else} \\
		\end{cases}
	\end{align*}
}

	%We have $\xi_{G,r} = F_n(SBM_n)$ and $\xi_r := \log(\frac{\PP(X_{s_0} = 1 \vert GW, (X_s)_{s \in \partial [GW]_r})}{\PP(X_{s_0} = 2 \vert GW, (X_s)_{s \in \partial [GW]_r})}) = F_{\infty}(GW)$.
	%Using theorem~\ref{th:sbm_to_gw} one can show that $(SBM_n, E_{G,r})$ converges locally to $(GW,E_r)$: for any function positive function $\phi$ that 
	
	Let $(\tilde{G},\tilde{s}_0,\tilde{X})$ be a fixed pointed labeled graph such that $[\tilde{G}]_r$ is a tree. Let $\tilde{E}_r$ a subset of the vertices in $\partial [\tilde{G}]_r$.
	A straightforward extension of theorem~\ref{th:sbm_to_gw} gives us
	%We are first going to prove
	\begin{equation} \label{eq:cv_fn}
		F_n(\tilde{G},\tilde{s}_0,\tilde{X},\tilde{E}_r) \xrightarrow[n \to \infty]{} F_{\infty}(\tilde{G},\tilde{s}_0,\tilde{X},\tilde{E}_r)
	\end{equation}
	%To prove \eqref{eq:cv_fn}, it suffices to apply theorem~\ref{th:sbm_to_gw} to the functions
	%\begin{align*}
		%&\phi_i: (G',X',s'_0) \mapsto
		%\\
		%&\E_{E'_r} 1(X_{s_0} = i , [G']_r = [\tilde{G}]_r, (X'_s)_{s \in E'_{r}} = (\tilde{X}_s)_{s \in \tilde{E}_r}, E'_r = \tilde{E}_r)
	%\end{align*}
	%for $i=1,2$, where $\E_{E'_r}$ denote the expectation over $E'_r$, a random subset of $[G']_r$, obtained by including in $E'_r$ each vertex independently with propability $q$. Theorem~\ref{th:sbm_to_gw} gives then that
	%$$
	%\E \phi_i(SBM_n) \xrightarrow[n \to \infty]{} \E \phi_i(GW)
	%$$
	%which means precisely
	%\begin{align*}
		%&\PP(X_{s_0} = i , [G]_r = [\tilde{G}]_r, (X_s)_{s \in E_{G,r}} = (\tilde{X}_s)_{s \in \tilde{E}_r}, E_{G,r} = \tilde{E}_r)
		%\\
		%&\xrightarrow[n \to \infty]{}
		%\\
		%&\PP(X_{s_0} = i , [T]_r = [\tilde{G}]_r, (X_s)_{s \in E_r} = (\tilde{X}_s)_{s \in \tilde{E}_r}, E_r = \tilde{E}_r)
	%\end{align*}
	%We have proved equation \eqref{eq:cv_fn}.
	%\\

	Another consequence of the local convergence of $(SBM_n)$ toward $GW$ is that one can couple $(SBM_n,E_{G,r})_n$ and $(GW,E_r)$ on a probability space such that there exists $n_0 \in \N$ such that
	$$
	\forall n \geq n_0, [SBM_n,E_{G,r}]_r = [GW,E_r]_r
	$$
	Let $n \geq n_0$
	$$
	F_n(SBM_n,E_{G,r}) = F_n(GW,E_r) \xrightarrow[n \to \infty]{} F_{\infty}(GW,E_r)
	$$
	On this probability space $F_n(SBM_n,E_{G,r})$ converges almost surely to $F_{\infty}(GW,E_r)$, hence the convergence of the conditional distributions.
\end{proof}
\\

Define the local test
\begin{equation} \label{eq:def_tloc}
T^{loc}_r(G,X) =
\begin{cases}
	1 & \text{if } \xi_{G,r} \geq \log\frac{p}{1-p} \\
	2 & \text{otherwise} \\
\end{cases}
\end{equation}
Using the results on the branching process, we are now able to fully characterize the performance of $T^{loc}$.

\begin{proposition} \label{prop:local_test}
	For all $0 \leq q \leq 1$,
	$$
	\lim_{r \to \infty}	\lim_{d \to \infty} \lim_{n \to \infty}  P_{succ}(T^{loc}_r) = 2 \PP(\mathcal{N}(\mu_{\infty}/2, \mu_{\infty})>0)-1
	$$
	where $\mu_{\infty}$ is the limit of the sequence defined by \eqref{eq:rec_mu_q}.
%	\begin{equation} \label{eq:rec_mu_q}
%		\begin{cases}
%			\mu_1 &=  \frac{q \lambda}{p(1-p)} \\
%			\mu_{k+1} &= G(\mu_k)
%		\end{cases}
%	\end{equation}
\end{proposition}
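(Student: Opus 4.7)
The plan is to chain the three limits in the order $n\to\infty$, then $d\to\infty$, then $r\to\infty$, using Proposition~\ref{prop:convergence_xi} for the first, the Gaussian cavity analysis of Lemmas~\ref{lem:init}--\ref{lem:iter} for the second, and the fixed-point structure of Conjecture~\ref{lem:fixed_points} for the third. First I would observe that a Neyman--Pearson argument identical to the one in the proof of Proposition~\ref{prop:equivalence_dtv_psucc} shows that the threshold $h=\log\frac{p}{1-p}$ appearing in the definition of $T^{loc}_r$ is precisely the one that makes this test the Bayes-optimal estimator of $X_{s_0}$ measurable with respect to $([G]_r, (X_s)_{s\in E_{G,r}})$, and hence
$$
P_{succ}(T^{loc}_r) \;=\; D_{TV}(P_{G,r}^{(1)}, P_{G,r}^{(2)}).
$$

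For fixed $r$ and $d$, I would realize the coupling underlying Theorem~\ref{th:sbm_to_gw} and Proposition~\ref{prop:convergence_xi} so that the pointed labeled balls $[SBM_n, E_{G,r}]_r$ and $[GW, E_r]_r$ coincide with probability tending to $1$ as $n\to\infty$. Since $\xi_{G,r}$ and $\xi_r$ are deterministic functionals of those balls (this is the content of Lemma~\ref{lem:explicit_message} on the tree side and of the definition \eqref{eq:xi_Gr} on the graph side), the coupling forces $D_{TV}(P_{G,r}^{(i)}, P_r^{(i)}) \to 0$ for $i=1,2$, and the triangle inequality combined with Lemma~\ref{lem:dtv_psucc} yields
$$
\lim_{n\to\infty} P_{succ}(T^{loc}_r) \;=\; D_{TV}(P_r^{(1)}, P_r^{(2)}) \;=\; P_{succ}(T^{(GW)}_r).
$$

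To take $d\to\infty$ at fixed $r$, I would reuse the computation carried out in the proof of the corollary at the end of Section~\ref{section:cavity_method}: Lemma~\ref{lem:init} provides the base case and $r-1$ applications of Lemma~\ref{lem:iter} give the Wasserstein limits $\xi_r^{(1)} \xrightarrow[d\to\infty]{W} \mathcal{N}(h+\mu_r/2, \mu_r)$ and $\xi_r^{(2)} \xrightarrow[d\to\infty]{W} \mathcal{N}(h-\mu_r/2, \mu_r)$, where $\mu_r$ is the $r$-th iterate of $G$ starting from $\mu_1=q\lambda/(p(1-p))$. Since the Gaussian limits carry no mass at $h$, weak convergence is enough to pass $\PP(\xi_r^{(i)}\geq h)$ to the limit, and a Gaussian symmetry collapse yields
$$
\lim_{d\to\infty} P_{succ}(T^{(GW)}_r) \;=\; 2\PP\bigl(\mathcal{N}(\mu_r/2, \mu_r) > 0\bigr) - 1.
$$

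The final step $r\to\infty$ would follow from Conjecture~\ref{lem:fixed_points}: in each of the regimes distinguished there, the iterates $\mu_r$ converge monotonically to a fixed point $\mu_\infty$ of $G$ whose value is determined by the position of $\mu_1$ relative to the unstable fixed point, and continuity of $\mu \mapsto 2\Phi(\sqrt{\mu}/2)-1$ then concludes. The step I expect to be the main obstacle is the first one: for finite $r$ and $d$, the tree log-likelihood ratio $\xi_r$ typically carries a positive-probability atom at $h$ (coming for instance from vertices whose radius-$r$ neighborhood contributes nothing informative to the posterior), so weak convergence from Proposition~\ref{prop:convergence_xi} alone would not be enough to transfer the boundary probability $\PP_i(\xi_{G,r}\geq h)$ to the limit. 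Working at the total-variation level via the explicit local coupling is what bypasses this issue, since atoms are preserved under almost sure equality.
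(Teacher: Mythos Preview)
Your approach is essentially the paper's: chain Proposition~\ref{prop:convergence_xi}, then Lemmas~\ref{lem:init}--\ref{lem:iter}, then the fixed-point structure of $G$. The paper's own proof is terser and simply passes $\PP(\xi_{G,r}\ge h\mid X_{s_0}=1)\to\PP(\xi_r\ge h\mid X_{s_0}=1)$ by weak convergence, without addressing the atom at $h$ that you correctly flag (for $q<1$ it arises whenever $E_r=\emptyset$, and for $q=1$ whenever $L_r=\emptyset$).

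Your fix, however, has a small imprecision. You write that ``$\xi_{G,r}$ and $\xi_r$ are deterministic functionals of those balls'' and deduce $D_{TV}(P_{G,r}^{(i)},P_r^{(i)})\to 0$ from the ball coupling. But they are \emph{different} functionals of the same ball: $\xi_{G,r}$ is the SBM posterior log-ratio ($F_n$ in the notation of the proof of Proposition~\ref{prop:convergence_xi}), while $\xi_r$ is the GW posterior log-ratio ($F_\infty$); these disagree for every finite $n$ even when the balls coincide under the coupling. So eventual equality of balls does not give eventual equality of $\xi_{G,r}$ and $\xi_r$, only $\xi_{G,r}\to\xi_r$ almost surely, and your TV claim on $\RR$ does not follow. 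The clean repair is to run the TV argument one level up, on the ball laws themselves: writing $Q_i^{(n)}$ and $Q_i$ for the conditional laws of $([G]_r,E_{G,r},(X_s)_{s\in E_{G,r}})$ and $([T]_r,E_r,(X_s)_{s\in E_r})$ given $X_{s_0}=i$, the coupling of Proposition~\ref{prop:convergence_xi} (eventual equality on a countable state space) gives $D_{TV}(Q_i^{(n)},Q_i)\to 0$. Since $T^{loc}_r$ and $T^{(GW)}_r$ are the respective Bayes rules, $P_{succ}(T^{loc}_r)=D_{TV}(Q_1^{(n)},Q_2^{(n)})$ and $P_{succ}(T^{(GW)}_r)=D_{TV}(Q_1,Q_2)$, and the triangle inequality finishes the $n\to\infty$ step. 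Your $d\to\infty$ and $r\to\infty$ steps are then exactly as in the paper.
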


\begin{proof}
Using Proposition~\ref{prop:convergence_xi} and Lemma~\ref{lem:iter}:
$$
\begin{cases}
	P_{G,r}^{(1)}
	\xrightarrow[n \to \infty]{(d)}
	P_r^{(1)}
	\xrightarrow[d \to \infty]{(d)}
	\mathcal{N}(h + \frac{\mu_r}{2},\mu_r)
	\\
	P_{G,r}^{(2)}
	\xrightarrow[n \to \infty]{(d)}
	P_r^{(2)}
	\xrightarrow[d \to \infty]{(d)}
	\mathcal{N}(h - \frac{\mu_r}{2},\mu_r)
\end{cases}
$$

%If $\lambda < \lambda_{sp}(p)$, $\mu_r \xrightarrow[r \to \infty]{} 0$, therefore 
Where $\mu_r$ is defined by the recursion \eqref{eq:rec_mu_q}, by Lemmas~\ref{lem:init} and~\ref{lem:iter}. Therefore (recall that $h= \log\frac{p}{1-p}$)
\begin{align*}
	P_{succ}(T^{loc}_r)& = \PP(\xi_{G,r} \geq h | X_{s_0} = 1) + \PP(\xi_{G,r} < h | X_{s_0} = 2) - 1 \\
					   &\xrightarrow[n \to \infty]{}
	P_{G,r}^{(1)}(\xi_{G,r} \geq h) + P_{G,r}^{(2)}(\xi_{G,r} < h) - 1 \\
	&\xrightarrow[d \to \infty]{}
	2 \PP(\mathcal{N}(\mu_r/2,\mu_r) > 0) - 1 \\
	&\xrightarrow[r \to \infty]{}
	2 \PP(\mathcal{N}(\mu_{\infty}/2,\mu_{\infty}) > 0) - 1 
\end{align*}

\end{proof}

\subsection{Upper bound}

We can now deduce an upper bound on the optimal performance for the community detection problem.
\begin{corollary} \label{cor:upper_bound}
  We have:
  $$
	\limsup_{d \to \infty} \limsup_{n \to \infty} P_{succ}(T^{opt}) \leq 2 \PP(\mathcal{N}(\mu_{\infty}/2, \mu_{\infty})>0)-1
	$$
	where $\mu_{\infty}$ is the limit of the sequence defined by \eqref{eq:rec_mu_q}.
%	\begin{equation} \label{eq:rec_mu_infty}
%	\begin{cases}
%		\mu_1 &=  \frac{\lambda}{p(1-p)} \\
%		\mu_{k+1} &= G(\mu_k)
%	\end{cases}
%	\end{equation}
\end{corollary}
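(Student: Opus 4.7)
The plan is to use Proposition~\ref{prop:equivalence_dtv_psucc} to recast $P_{succ}(T^{opt})$ as a total variation distance and exploit the fact that revealing additional data can only increase this quantity. Combined with the local convergence Theorem~\ref{th:sbm_to_gw} and the cavity analysis carried out in Section~\ref{section:cavity_method}, this will reduce the upper bound to the Gaussian computation already performed on the limiting labeled tree $GW$.

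First I would introduce, for each fixed $r\in\N$, an augmented estimator $T^{aug}_r$ that has access not only to $(G,\text{revealed }q\text{-fraction})$ but also to the true labels of all vertices at distance exactly $r$ from a uniformly chosen vertex $s_0$. By the data processing inequality,
\[
P_{succ}(T^{opt}) \;\leq\; P_{succ}(T^{aug}_r).
\]
Next I would argue, by a routine extension of Proposition~\ref{prop:convergence_xi} to jointly track the boundary labels, that the conditional distributions of the LLR underlying $T^{aug}_r$ converge as $n\to\infty$ to their analogues on the tree $GW$. On $GW$, the Markov property makes the LLR measurable with respect to $[T]_r$ alone, so the cavity recursion of Lemmas~\ref{lem:init}--\ref{lem:iter} applies. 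In the subsequent $d\to\infty$ limit the two conditional laws become Gaussians $\mathcal{N}(\pm \mu_r^\star/2,\mu_r^\star)$ for some $\mu_r^\star$ obtained by iterating $G$, yielding a rescaled success probability of $2\,\PP(\mathcal{N}(\mu_r^\star/2,\mu_r^\star)>0)-1$. Letting finally $r\to\infty$, the iterates $\mu_r^\star$ converge to an attractive fixed point of $G$ which, by the stability statements of Conjecture~\ref{lem:fixed_points}, coincides with the $\mu_\infty$ defined through \eqref{eq:rec_mu_q}.

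The main obstacle I expect is the bookkeeping of the cavity recursion when \emph{both} interior $q$-reveals throughout $[T]_r$ \emph{and} full boundary reveals at $\partial[T]_r$ are present: Lemma~\ref{lem:iter} was set up with reveals only at the leaves. Either one has to generalize that lemma to absorb the two kinds of reveals simultaneously (and check that the recursion still iterates $G$ from the correct initial condition), or one must use the stability properties of Conjecture~\ref{lem:fixed_points} to argue that the larger initial variance produced by the boundary reveal lies in the basin of the same fixed point $\mu_\infty$, and that the additional $q$-reveals at intermediate depths do not shift the limiting attractor. Once this is done, chaining the three limits $n\to\infty$, $d\to\infty$, $r\to\infty$ in the inequality above delivers the stated upper bound.
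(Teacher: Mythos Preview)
Your approach is essentially the paper's: dominate $T^{opt}$ by an oracle that additionally sees the boundary labels at depth $r$, use the Markov property on the tree to make this oracle local, then invoke the local convergence and cavity analysis. The paper proceeds exactly this way.

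Where you overcomplicate is in carrying a $q$-fraction of revealed labels into the augmented estimator. In the statement, $T^{opt}$ is the optimal test with \emph{no} reveals, so there is no $q$-fraction to preserve. The paper's oracle $T^*_r$ simply reveals \emph{all} labels on $\partial[G]_r$ and nothing else; by the Markov structure of the labeled tree, conditioning on the full boundary makes the rest of $G$ uninformative about $X_{s_0}$, so the oracle's log-likelihood ratio is exactly $\xi_{G,r}$ with $q=1$. One then applies Proposition~\ref{prop:local_test} directly, and the recursion \eqref{eq:rec_mu_q} is run from $\mu_1=\lambda/(p(1-p))$.

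Consequently, the ``obstacle'' you anticipate---handling interior $q$-reveals throughout $[T]_r$ simultaneously with full boundary reveals---does not arise. In the paper's setup the reveals used in $\xi_{G,r}$ live only on $\partial[G]_r$, never in the interior, so Lemmas~\ref{lem:init}--\ref{lem:iter} apply verbatim with $q=1$. Drop the $q$-fraction from your augmented estimator and your sketch becomes the paper's proof.
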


Proposition~\ref{prop:below_sp} and the first part of Proposition~\ref{prop:ks} are then consequences of this corollary:
\begin{itemize}
	\item when $\lambda < \lambda_{sp}(p)$, $\mu_{\infty}=0$ and consequently $\lim_{d \to \infty} \limsup_{n \to \infty} P_{succ}(T^{opt})=0$.
	\item when $\lambda > 1$, then $\mu_{\infty} = \alpha$, hence the first bound of Proposition~\ref{prop:ks}.
\end{itemize}

%The proof of Corollary~\ref{cor:upper_bound} is omitted here.% and will be presented in an upcoming paper.

\begin{proof}
Let $r > 0$. Let $s_0$ be uniformly chosen from the vertices of $G$. We aim at estimating $X_{s_0}$ from the rooted graph $(G,s_0)$. As seen in section~\ref{section:community_detection}, the optimal test in term of rescaled success probability $P_{succ}$ is 
$$
T^{opt}(G) =
\begin{cases}
	1 & \text{if } \log(\frac{\PP(X_{s_0} = 1 \vert G)}{\PP(X_{s_0} = 2 \vert G)}) \geq \log(\frac{p}{1-p}) \\
	2 & \text{otherwise} \\
\end{cases}
$$

We are going to analyze the oracle
$$
T^*_r(G,X) =
\begin{cases}
	1 & \text{if } \log\frac{\PP(X_{s_0} = 1 \vert G, (X_s)_{s \in \partial [G]_r})}{\PP(X_{s_0} = 2 \vert G, (X_s)_{s \in \partial [G]_r})} \geq \log\frac{p}{1-p} \\
	2 & \text{otherwise} \\
\end{cases}
$$

Obviously, $P_{succ}(T^{opt}) \leq P_{succ}(T^*_r)$. The oracle $T^*_r$ uses extra information $(X_s)_{s \in \partial [G]_r}$ but is a local test, i.e.\ a test that only depends on the ball of radius $r$:
\begin{align*}
	\xi_{G,r}^* &:= \log \Big(\frac{\PP(X_{s_0} = 1 \vert G, (X_s)_{s \in \partial [G]_r})}{\PP(X_{s_0} = 2 \vert G, (X_s)_{s \in \partial [G]_r})} \Big) \\
			  &= \log \Big(\frac{\PP(X_{s_0} = 1 \vert [G]_r, (X_s)_{s \in \partial [G]_r})}{\PP(X_{s_0} = 2 \vert [G]_r, (X_s)_{s \in \partial [G]_r})} \Big) 
\end{align*}

$\xi^*_{G,r}$ is thus equal to $\xi_{G,r}$ from equation \eqref{eq:xi_Gr}, when $q=1$. We conclude using proposition~\ref{prop:local_test} and the fact $P_{succ}(T^{opt}) \leq P_{succ}(T_r^*)$.
\end{proof}

\subsection{Lower bound}

We now establish a lower bound for estimation when a fraction $q$ of the labels is revealed.

\begin{corollary} \label{cor:lower_bound}
	For all $0 \leq q \leq 1$,
	\begin{equation} \label{eq:lower_bound_local}
		\liminf_{d \to \infty} \liminf_{n \to \infty} P_{succ}(T^{opt}(G,q)) \geq 2 \PP(\mathcal{N}(\mu_{\infty}/2, \mu_{\infty})>0)-1
	\end{equation}

	where $\mu_{\infty}$ is the limit of the sequence defined by \eqref{eq:rec_mu_q}.
%	\begin{equation} \label{eq:rec_mu_q}
%		\begin{cases}
%			\mu_1 &=  \frac{q \lambda}{p(1-p)} \\
%			\mu_{k+1} &= G(\mu_k)
%		\end{cases}
%	\end{equation}
\end{corollary}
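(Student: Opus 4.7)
The plan is to exhibit the local test $T^{loc}_r$ defined in~\eqref{eq:def_tloc} as a concrete (sub-optimal) estimator and compare it to $T^{opt}(G,q)$. Because $T^{loc}_r$ depends only on the ball $[G]_r$ around a uniformly chosen vertex $s_0$ together with the revealed labels $(X_s)_{s\in E_{G,r}}\subseteq (X_v)_{v\in E_G}$, it is a measurable function of the observation $(G,(X_v)_{v\in E_G})$, hence a bona-fide estimator in the $q$-revealed setting. By definition of $T^{opt}(G,q)$ as the Bayes-optimal estimator for $P_{succ}$, we obtain the pointwise domination
\[
P_{succ}(T^{opt}(G,q)) \;\ge\; P_{succ}(T^{loc}_r)
\]
for every fixed $r\ge 0$ and every $n,d$.

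Next I would take iterated limits. Fixing $r$ and taking $\liminf_{n\to\infty}$ then $\liminf_{d\to\infty}$ on both sides, the right-hand side is governed by Proposition~\ref{prop:local_test}, which asserts
\[
\lim_{d\to\infty}\lim_{n\to\infty} P_{succ}(T^{loc}_r) \;=\; 2\,\PP\!\left(\mathcal{N}(\mu_r/2,\mu_r)>0\right)-1,
\]
where $(\mu_r)$ is the sequence from~\eqref{eq:rec_mu_q}. Since the inequality survives the $\liminf$'s, this gives
\[
\liminf_{d\to\infty}\liminf_{n\to\infty} P_{succ}(T^{opt}(G,q)) \;\ge\; 2\,\PP\!\left(\mathcal{N}(\mu_r/2,\mu_r)>0\right)-1
\]
for every $r$. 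Finally I would let $r\to\infty$: the sequence $\mu_r$ converges to $\mu_\infty$ (a fixed point of $G$), and since $\mu\mapsto 2\PP(\mathcal{N}(\mu/2,\mu)>0)-1 = 2\PP(Z>-\sqrt{\mu}/2)-1$ is continuous in $\mu\ge 0$, we recover
\[
\liminf_{d\to\infty}\liminf_{n\to\infty} P_{succ}(T^{opt}(G,q)) \;\ge\; 2\,\PP\!\left(\mathcal{N}(\mu_\infty/2,\mu_\infty)>0\right)-1,
\]
which is~\eqref{eq:lower_bound_local}.

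The only subtle point — and what I would view as the main thing to double-check rather than a real obstacle — is that the comparison $P_{succ}(T^{opt}(G,q)) \ge P_{succ}(T^{loc}_r)$ is legitimate, i.e.\ that $T^{loc}_r$ only uses information genuinely available in the $q$-revealed SBM observation. This is immediate from the construction of $\xi_{G,r}$ in~\eqref{eq:xi_Gr}: the conditioning is on $[G]_r$ and on $(X_s)_{s\in E_{G,r}}$, both of which are functions of $(G,(X_v)_{v\in E_G})$. All heavy lifting (local convergence of $SBM_n$ to $GW$, Gaussian fixed-point analysis of the cavity recursion) has already been done in Proposition~\ref{prop:local_test}, so the proof reduces to the simple Bayes-optimality comparison and a continuity argument in $r$.
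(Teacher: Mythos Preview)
Your proof is correct and follows essentially the same approach as the paper: bound $P_{succ}(T^{opt}(G,q))$ below by $P_{succ}(T^{loc}_r)$ using Bayes optimality, then invoke Proposition~\ref{prop:local_test}. The paper's own proof is terser, simply citing Proposition~\ref{prop:local_test} (which already packages the $r\to\infty$ limit), whereas you unfold that last step and the continuity of $\mu\mapsto 2\PP(\mathcal N(\mu/2,\mu)>0)-1$ explicitly; but there is no substantive difference.
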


The second part of Proposition~\ref{prop:ks} follows from this corollary. Indeed, when $\lambda > 1$ and $q>0$, $\mu_{\infty} = \alpha >0$.

Corollary~\ref{cor:lower_bound} leads also to proposition~\ref{prop:belowks}: when $q >  \frac{\beta p (1-p)}{\lambda}$, then $\mu_1 > \beta$ and thus $(\mu_k)$ converges to the fixed point $\alpha > \beta$ of $G$: $\mu_{\infty} = \alpha > 0$.

We deduce also Proposition~\ref{prop:lower_bound_ks} from the proof of Corollary~\ref{cor:lower_bound}. Indeed, we will see that the lower bound in \eqref{eq:lower_bound_local} is achieved by a local test.

\begin{proof}
	Here, we are going to bound by below $P_{succ}(T^{opt}(G,q))$ by the performance of the local test $T^{loc}_r(G,X)$ defined by \eqref{eq:def_tloc}.
Obviously,
\begin{align*}
	P_{succ}(T^{opt}(G,q)) \geq P_{succ}(T^{loc}_r) 
\end{align*}
Proposition~\ref{prop:local_test} gives then the result.
\end{proof}

\bibliographystyle{IEEEtran}
\bibliography{IEEEabrv,references}

\end{document}